\let\OLDthebibliography\thebibliography
\renewcommand\thebibliography[1]{
	\OLDthebibliography{#1}
	\setlength{\parskip}{0pt}
	\setlength{\itemsep}{2pt} 
}
\theoremstyle{definition}
\newtheorem{df}{Definition}[section]
\newtheorem{rem}[df]{Remark}
\theoremstyle{plain}
\newtheorem{thm}[df]{Theorem}
\newtheorem{pp}[df]{Proposition}
\newtheorem{lm}[df]{Lemma}
\newcommand{\fk}{\mathfrak}
\newcommand{\mc}{\mathcal}
\newcommand{\wtd}{\widetilde}
\newcommand{\wht}{\widehat}
\newcommand{\ovl}{\overline}
\newcommand{\End}{\mathrm{End}} 
\newcommand{\id}{\mathbf{1}}
\newcommand{\Hom}{\mathrm{Hom}}
\newcommand{\Vir}{\mathrm{Vir}}
\newcommand{\bk}[1]{\langle {#1}\rangle}
\newcommand{\gk}{\mathfrak g}
\newcommand{\mbb}{\mathbb}
\newcommand{\blt}{\bullet}
\newcommand{\Nbb}{\mathbb N}
\newcommand{\Zbb}{\mathbb Z}
\numberwithin{equation}{section}
\title{Polynomial energy bounds for type $F_4$ WZW-models}
\author{{\sc Bin Gui}
}
\date{}
\begin{document}\sloppy 
	\pagenumbering{arabic}
	\setcounter{section}{-1}

	\maketitle
	
\begin{abstract}
We prove that   for any type $F_4$ unitary affine VOA $V^l_{\fk f_4}$, sufficiently many intertwining operators satisfy polynomial energy bounds. This finishes the Wassermann type analysis of intertwining operators for all WZW-models.
\end{abstract}	
	
\tableofcontents


\section{Introduction}

In his breakthrough work \cite{Was98}, A. Wassermann computed the fusion rules for type $A$ WZW conformal nets, thus establishing the first important relation between the tensor structures of VOA modules and conformal net ones. One of the key steps in \cite{Was98} is to  prove that sufficiently many intertwining operators have bounded smeared primary (i.e. lowest weight) fields. The boundedness condition does not hold in general, and should be replaced by the \emph{(polynomial) energy bounds condition} \cite{CKLW18}, which says roughly that the smeared intertwining operator is bounded by some $n$-th power of the energy operator $L_0$. By generalizing the ideas of Wassermann,  Toledano-Laredo established in \cite{TL04} the energy bounds condition for sufficiently many intertwining operators of type $D$ unitary affine VOAs.  Similar results for type $BCG$ were proved recently by the author in  \cite{Gui19b}. The main strategies are the same (which we call the compression method): One first show the energy bounds for level $1$ intertwining operators. The higher level cases are treated inductively by considering the diagonal embedding $V_\gk^{l+1}\subset V_\gk^1\otimes V_\gk^l$ and showing that sufficiently many $V_\gk^{l+1}$-intertwining operators are compressions (i.e. restrictions) of $V_\gk^1\otimes V_\gk^l$-intertwining operators. Since the latter are energy-bounded by induction, the former are so.

The most difficult and technical part of this  method is to verify that compressing the intertwining operators of the larger VOA (say $V_\gk^1\otimes V_\gk^l$) produce enough intertwining operators of the smaller VOA (say $V_\gk^{l+1}$). In \cite{Was98,TL04,Gui19b}  this is  mostly done  by Lie algebraic methods, which require case by case studies. Generalizing such analysis to type $E$ (especially $E_8$) WZW-models will be still possible but  significantly more difficult. Fortunately, when $\gk$ is of type $ADE$, due to the facts (cf. \cite{ACL19,Ara15a,Ara15b}) that $V_\gk^{l+1}$ and its commutant in $V_\gk^1\otimes V_\gk^l$ are both regular, and that $V_\gk^{l+1}$ equals its double commutant, it is always possible to produce enough intertwining operators via compressions (cf. \cite{KM15,Gui20b}). This proves the energy bounds condition for \emph{all} WZW-intertwining operators  of type $ADE$ (which, even in the case of type $AD$, is stronger than the results in \cite{Was98,TL04}). Moreover, this method is vertex algebraic and does not require case by case studies. Unfortunately, it is not known if the main results in \cite{ACL19} can be applied to other Lie types or not. Thus, so far,  for the remaining type $F_4$, one can only prove the energy bounds using Lie theoretic methods. This is the goal of our present paper. 

Let $\fk f_4$ be the type $F_4$ complex simple Lie algebra. Let $l\in\mbb Z_+$. The main result of this article is that sufficiently many intertwining operators of the unitary affine VOA $V_{\fk f_4}^l$ is energy-bounded. To be more precise, let $\lambda_4$ be the highest weight associated to the smallest non-trivial irreducible $\fk f_4$-module $L_{\fk f_4}(\lambda_4)$ (which is of dimension $26$ and admissible at level $1$), and $L_{\fk f_4}(\lambda_4,l)$ is the irreducible representation of the affine VOA $V_{\fk f_4}^l$ with highest weight $\lambda_4$. We show that any intertwining operator with charge space $L_{\fk f_4}(\lambda_4,l)$ is energy-bounded (Thm. \ref{lb2}), and that any irreducible $V_{\fk f_4}^l$-module is a submodule of a tensor (fusion) product of $L_{\fk f_4}(\lambda_4,l)$ (Thm. \ref{lb22}). (The second result says that considering only intertwining operators with charge space $L_{\fk f_4}(\lambda_4,l)$ is ``sufficient".) Theorem \ref{lb22} is an easy consequence of $V^l_{\fk f_4}$-fusion rules  well known in the literature; theorem \ref{lb2} is the non-trivial part of our main result. In the following, we briefly explain the strategies of the proof by comparing $\fk f_4$ with $\fk g_2$ studied in \cite{Gui19b}.

One might guess that $\fk f_4$ and $\fk g_2$ can be treated in a similar way due to the conformal embedding $V_{\fk f_4}^1\otimes V_{\fk g_2}^1\subset V_{\fk e_8}^1$. However, it turns out that $\fk f_4$ is very different from $\fk g_2$ and from all the classical Lie types in the following two aspects.
\begin{enumerate}
	\item For any complex simple Lie algebra $\gk$ not of type $F_4$ or $E_8$, the weight multiplicities of the smallest non-trivial irreducibles are bounded by $1$. By contrast, $L_{\fk f_4}(\lambda_4)$ has weight $0$ with multiplicity $2$. Consequently, the tensor product rules of $L_{\fk f_4}(\lambda_4)$ exceed $1$, which makes the analysis of $\fk f_4$ more subtle.
	\item It seems very difficult (if not impossible) to reduce the higher level cases to the level $1$ case using the Lie algebraic methods as in \cite{Was98,TL04,Gui19b}. Thus, one also needs to treat level $2$ separately. However, the method  for $\fk g_2$ level $1$ (see \cite{Gui19b} especially lemma 5.5) can be applied only to $\fk f_4$ level $1$ but not level $2$.
\end{enumerate}

We resolve the second issue by exploiting the conformal embeddings
\begin{gather*}
V^1_{\fk {sl}_2}\otimes V^1_{\fk {sp}_6}\subset V^1_{\fk f_4}\\
\Vir^{c_9}\otimes V^2_{\fk {sl}_2}\otimes V^2_{\fk {sp}_6}\subset V^2_{\fk f_4}
\end{gather*} 
where $\Vir^{c_9}$ is the (regular) unitary Virasoro VOA with central charge $c_9<1$. Since the intertwining operators of the smaller VOAs are energy-bounded, so are those of the larger ones. See section \ref{lb17} for more details. As for the first issue, we show that the $2$-dimensional weight $0$ subspace $L_{\fk f_4}(\lambda_4)[0]$ is spanned by two particular vectors (called $F_{\rho_3}v_{\rho_3},F_{\rho_4}v_{\rho_4}$) killed by the homomorphisms in $\Hom_\gk(\lambda_4\otimes\lambda_4,\lambda_4)$  and in $\Hom_\gk(\lambda_4\otimes\lambda_3,\lambda_3)$ respectively. (Here $\lambda_3$ is the highest weight for the irreducible $273$-dimensional representation.) The key result is lemma \ref{lb1}, for which we give two different proofs. The importance of this result is that any homomorphism of the form $\Hom_\gk(\lambda_4\otimes\lambda,\lambda)$ (where $\lambda$ is a dominant integral weight of $\fk f_4$) reduces to a linear combination of those in $\Hom_\gk(\lambda_4\otimes\lambda_4,\lambda_4)$  and  $\Hom_\gk(\lambda_4\otimes\lambda_3,\lambda_3)$. (For instance, see the proof of proposition \ref{lb18}.) 

Section \ref{lb19} contains the most technical part of this article, which reduces the four types of intertwining operators in proposition \ref{lb13} to those of level $1$ and $2$. The main idea is the same as case (II) of \cite{Gui19b} section 5.2. In particular, proposition \ref{lb9} and lemma \ref{lb12}, on which the proof of  proposition \ref{lb13} relies, also appears either explicitly or implicitly in \cite{Gui19b}. Indeed, proposition \ref{lb9} is a (slight) generalization of \cite{Gui19b} lemma 2.15, whereas lemma \ref{lb12} generalizes the arguments in the proof of \cite{Gui19b} lemma 5.6.

The main result of this article, together with those in \cite{Was98,TL04,Gui19b,Gui20b}, is crucial for showing the equivalence of the VOA tensor categories and the conformal net tensor categories associated to WZW-models and their regular cosets. This is the main topic of \cite{Gui20a} and will not be discussed in the present article.

\subsubsection*{Acknowledgment}

The author would like to thank James Tener for helpful discussions.

\section{Energy-bounded intertwining operators}

We refer the readers to \cite{DL14,CKLW18,Gui19a} for the  basics  of unitary VOAs, unitary modules, and unitary intertwining operators. Let $V$ be a unitary VOA. Thus $V$ is equipped with an inner product $\bk{\cdot|\cdot}$ (antilinear on the second variable) and an anilinear automorphism $\Theta$ (preserving the vacuum vector $\Omega$ and the conformal vector $\nu$) such that for every $v,v_1,v_2\in V$,
\begin{align*}
\bk{v_1|Y(v,\ovl z)v_2}=\bk{Y(e^{zL_1}(-z^{-2})^{L_0}\Theta v,z^{-1})v_1|v_2}.
\end{align*}
Moreover,  $\Theta$ is anti-unitary, and $\Theta^2=\id_V$. A $V$-module $(W_i,Y_i)$ is called unitary if $W_i$ is equipped with an inner product $\bk{\cdot|\cdot}$ under which the above relation holds for any $v\in V,v_1,v_2\in W_i$. In particular, $V$ is a unitary $V$-module. The eigenvalues of $L_0$ on $W_i$ are non-negative.

For each $v\in V$ we write $Y(v,z)=\sum_{n\in\mbb Z}Y(v)_nz^{-n-1}$. Recall that if $W_i,W_j,W_k$ are unitary $V$-modules, a type ${W_k\choose W_iW_j}={k\choose i~j}$ (unitary) intertwining operator $\mc Y$  is a linear map
\begin{gather*}
W_i\rightarrow \End(W_j,W_k)\{z\}\\
w_i\mapsto \mc Y(w^{(i)},z)=\sum_{n\in\mbb R}\mc Y(w^{(i)})_nz^{-n-1}
\end{gather*}
where the sum above is the formal sum,  each $\mc Y(w^{(i)})_n$ is in $\End(W_j,W_k)$, and the following conditions are satisfied:

(a) (Lower truncation) For any $w^{(j)}\in W_j$, $\mathcal Y(w^{(i)})_nw^{(j)}=0$ when $n$ is sufficiently large.

(b) (Jacobi identity) For any $u\in V,w^{(i)}\in W_i,m,n\in\mathbb Z,s\in\mbb R$, we have
\begin{align}
&\sum_{l\in\Nbb}{m\choose l}\mathcal Y\big(Y_i(u)_{n+l}w^{(i)}\big)_{m+s-l}\nonumber\\
=&\sum_{l\in\Nbb}(-1)^l{n\choose l}Y_k(u)_{m+n-l}\mathcal Y(w^{(i)})_{s+l}-\sum_{l\in\Nbb}(-1)^{l+n}{n\choose l}\mathcal Y(w^{(i)})_{n+s-l}Y_j(u)_{m+l}.
\end{align}

(c)	($L_{-1}$-derivative) $	\frac d{dz} \mathcal Y(w^{(i)},z)=\mathcal Y(L_{-1}w^{(i)},z)$.\\
We say that $W_i,W_j,W_k$ are respectively the \textbf{charge space}, the \textbf{source space}, and the \textbf{target space} of $\mc Y$. If $W_i,W_j,W_k$ are all irreducible, we say that $\mc Y$ is an \textbf{irreducible intertwining operator}.

Let $w^{(i)}\in W_i$ be a homogeneous vector (for simplicity), i.e. $w^{(i)}$ is an eigenvector of $L_0$. We say that $\mc Y(w^{(i)},z)$ is \textbf{energy-bounded} (or satisfies polynomial energy bounds) if there exist $M,t,r\geq 0$ such that for any $w^{(j)}\in W_j,n\in\mbb R$,
\begin{align*}
\lVert \mc Y(w^{(i)})_nw^{(j)} \lVert\leq M(1+|n|)^t\lVert (1+L_0)^rw^{(j)} \lVert
\end{align*}
where the norms $\lVert \cdot \lVert$ are defined by the inner products of $W_j,W_k$. We say that $\mc Y$ is energy-bounded if $\mc Y(w^{(i)},z)$ is so for any homogeneous $w^{(i)}\in W_i$. We say that $V$ is strongly energy-bounded if $Y_i$ is energy-bounded for any unitary $V$-module $W_i$.

Suppose that $\gk$ is a finite dimensional (unitary) complex simple Lie algebra. Let $(\cdot|\cdot)$ be the (unique) invariant inner product under which the longest roots of $\gk$ have length $\sqrt 2$. For each $l\in\Zbb_+=\{1,2,3,\dots\}$, the \textbf{affine VOA} $V_\gk^l$ is the unique unitary VOA generated by the weight $1$ subspace $V(1)$ such that $V(1)$ (with the naturally defined Lie algebra structure) is equivalent to $\gk$, and that the inner product $\bk{\cdot|\cdot}$ on $V(1)$ equals $l$ times $(\cdot|\cdot)$. $V^l_\gk$ is strongly energy-bounded. (See \cite{CKLW18} example 8.7.) Thus, by \cite{Gui19a} corollary 3.7-(a), we have

\begin{pp}\label{lb16}
Let $\mc Y$ be an irreducible $V^l_\gk$-module with charge space $W_i$. If there exists a homogeneous non-zero $w^{(i)}\in W_i$ such that $\mc Y(w^{(i)},z)$ is energy-bounded, then $\mc Y$ is energy-bounded.
\end{pp}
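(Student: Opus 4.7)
The plan is to observe that the set
\[ W_i^{\mathrm{eb}} := \Span\{w \in W_i \text{ homogeneous} : \mc Y(w, z) \text{ is energy-bounded}\} \]
forms a $V^l_\gk$-submodule of $W_i$. Since $W_i$ is irreducible (as $\mc Y$ is an irreducible intertwining operator) and the hypothesis puts a non-zero $w^{(i)}$ in $W_i^{\mathrm{eb}}$, irreducibility then forces $W_i^{\mathrm{eb}} = W_i$, which is precisely the definition of $\mc Y$ being energy-bounded.

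To see that $W_i^{\mathrm{eb}}$ is stable under every mode $Y_i(u)_n$ for homogeneous $u \in V^l_\gk$, I specialize the Jacobi identity in the excerpt to $m = 0$, obtaining
\[ \mc Y(Y_i(u)_n w)_s = \sum_{l \geq 0} (-1)^l \binom{n}{l} Y_k(u)_{n-l} \mc Y(w)_{s+l} - \sum_{l \geq 0} (-1)^{l+n} \binom{n}{l} \mc Y(w)_{n+s-l} Y_j(u)_l. \]
For fixed homogeneous $u, w$, each sum is finite when evaluated on a single vector (by lower truncation), and $Y_i(u)_n w$ is again homogeneous. Strong energy-boundedness of $V^l_\gk$ provides estimates $\|Y_k(u)_m x\|, \|Y_j(u)_m x\| \leq C_u (1 + |m|)^{t_u} \|(1 + L_0)^{r_u} x\|$ uniformly in $m$. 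Combining these with the hypothesized bound $\|\mc Y(w)_s x\| \leq M (1 + |s|)^t \|(1 + L_0)^r x\|$, and using $(1 + L_0)^r Y_j(u)_l = Y_j(u)_l (L_0 + \Delta_u - l)^r$ to move the energy factor past $Y_j(u)_l$, I obtain a bound of the form $\|\mc Y(Y_i(u)_n w)_s x\| \leq M'(1 + |s|)^{t'} \|(1 + L_0)^{r'} x\|$, so $Y_i(u)_n w$ lies in $W_i^{\mathrm{eb}}$.

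Since $W_i^{\mathrm{eb}}$ is a linear subspace by construction and stable under every $Y_i(u)_n$ by the above, it is indeed a $V^l_\gk$-submodule, completing the argument.

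The main technical obstacle is the careful bookkeeping of polynomial constants during the composition of energy bounded operators — in particular, commuting $(1 + L_0)^r$ past $Y_j(u)_l$ for unbounded $l$ and merging the various $m$- and $s+l$-dependent polynomial factors — so that the final bound retains the exact $M'(1+|s|)^{t'}(1+L_0)^{r'}$ form demanded by the definition of energy-boundedness. These estimates are elementary but lengthy, and form the essential content of the cited \cite{Gui19a} corollary 3.7-(a).
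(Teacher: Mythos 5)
Your proposal is correct, and it is essentially the paper's own route: the paper proves this proposition simply by citing strong energy-boundedness of $V^l_\gk$ (\cite{CKLW18}, example 8.7) together with \cite{Gui19a} corollary 3.7-(a), whose underlying argument is precisely your scheme --- the span of homogeneous vectors with energy-bounded fields is mode-stable via the $m=0$ Jacobi identity, hence a submodule, hence all of $W_i$ by irreducibility. Your bookkeeping remarks (finiteness of the sums by lower truncation, commuting $(1+L_0)^r$ past $Y_j(u)_l$, and absorbing the $l$-dependence using the truncation bound $l\lesssim E_x+\mathrm{const}$ on homogeneous $x$) are exactly the content of the cited corollary, so there is no gap.
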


In the remaining part of this article, unless otherwise stated, we shall let $\gk$ denote the type $F_4$ complex simple Lie algebra $\fk f_4$.

\section{The $26$-dimensional representation $L_\gk(\lambda_4)$}\label{lb6}

We follow the notations in \cite{Gui19b}. Recall that we set $\gk=\fk f_4$. Let $\fk h$ be the Cartan subalgebra of $\gk$. Then $\fk h^*$ is the space of weights of $\gk$. Let $\theta$ be the longest root of $\fk f_4$. Recall that the invariant inner product $(\cdot|\cdot)$ on $\fk f_4$ is chosen such that $(\theta|\theta)=2$. One then have an isomorphism $\fk h\simeq\fk h^*$ induced by this inner product. For any root $\alpha$ and weight $\lambda$, we set
\begin{align}
n_{\lambda,\alpha}=\frac{2(\lambda|\alpha)}{(\alpha|\alpha)}.
\end{align}
$\gk$ has an involution $*$ such that $X^*=-X$ when $X$ is an element in the compact real form. Then, with the $*$-structure, $\gk$ is a unitary Lie algebra. Thus, given a unitary representation $W$ of $\gk$ where $W$ is equipped with an inner product $\bk{\cdot|\cdot}$, we have for any $X\in\gk,u,v\in W$ that
\begin{align*}
\bk{Xu|v}=\bk{u|X^*v}.
\end{align*}
In particular, the adjoint representation $\gk\curvearrowright\gk$ is unitary. Hence, for any $X,Y,Z\in\gk$ we have
\begin{align*}
([X,Y]|Z)=(Y|[X^*,Z]).
\end{align*}

Let $l=\mbb Z_+$. Let $P_+(\gk)$ be the set of dominant integral weights of $\gk$. Let $P_+(\gk,l)$ be the set of all $\lambda\in P_+(\gk)$ admissible at level $l$ (i.e. $(\lambda|\theta)\leq l$). Let $V^l_\gk$ be the level $l$ unitary affine VOA associated to $\gk$. Then the irreducible unitary representations of $V_\gk^l$ are precisely those equivalent to some highest weight (equivalently, lowest energy) representation $L_\gk(\lambda,l)$, where $\lambda\in P_+(\gk,l)$. (See \cite{FZ92}.)  $L_\gk(\lambda)$ denotes the finite dimensional (unitary) irreducible representation of $\gk$ with highest weight $\lambda$. We also identify $L_\gk(\lambda)$ with the lowest energy subspace of $L_\gk(\lambda,l)$. If $\mu\in\fk h$ is a weight, then $L_\gk(\lambda)[\mu]$ the $\mu$-weight space of $\gk$, i.e., $L_\gk(\lambda)[\mu]$ consists of $v\in L_\gk(\lambda)$ such that $hv=(h|\mu)v$ for any $h\in\fk h$. 

$\fk f_4$ has the following simple roots (described by the ``orthogonal basis")
\begin{gather*}
\rho_1=[0,1,-1,0],\qquad \rho_2=[0,0,1,-1],\qquad \rho_3=[0,0,0,1],\qquad \rho_4=\frac 12[1,-1,-1,-1].
\end{gather*}
The corresponding fundamental weights are
\begin{gather*}
\lambda_1=[1,1,0,0],\qquad \lambda_2=[2,1,1,0],\qquad \lambda_3=\frac 12[3,1,1,1],\qquad\lambda_4=[1,0,0,0].
\end{gather*}
We also have $\theta=\lambda_1=[1,1,0,0]$. We also adopt the following notation
\begin{align*}
(n_1,n_2,n_3,n_4)=n_1\lambda_1+n_2\lambda_2+n_3\lambda_4+n_4\lambda_4.
\end{align*}

$\fk f_4$ has $24$ positive roots, whose lengths are either $1$ or $\sqrt 2$. Those with length $1$ fall into the following two groups. The roots in \textbf{Group A} are orthogonal to the highest root $\theta=\lambda_1$: 
\begin{align*}
(0,-1,2,-1)&=[0,0,0,1]\qquad(=\rho_3)\\
(-1,1,0,-1)&=[0,0,1,0]\\
(0,0,-1,2)&=\frac 12[1,-1,-1,-1]\qquad (=\rho_4)\\
(0,-1,1,1)&=\frac 12[1,-1,-1,1]\\
(-1,1,-1,1)&=\frac 12[1,-1,1,-1]\\
(-1,0,1,0)&=\frac 12[1,-1,1,1]
\end{align*}
Group B consists of those not orthogonal to $\theta$. There are also $6$ elements in group B:
\begin{align*}
[0,1,0,0],[1,0,0,0],\frac 12[1,1,\pm1,\pm1]
\end{align*}
Any positive root with length $\sqrt 2$ is of the form $[a,b,c,d]$ where only two of $a,b,c,d$ are non-zero; the first non-zero number is $1$; the second one is $1$ or $-1$. If $\alpha$ is a   root, we choose a raising operator $E_\alpha$. Then $F_\alpha:=E_\alpha^*$ is a lowering operator. We normalize $E_\alpha$ such that $E_\alpha,F_\alpha,H_\alpha=[E_\alpha,F_\alpha]$ satisfy $[H_\alpha,E_\alpha]=2E_\alpha$. Thus $[H_\alpha,F_\alpha]=-2E_\alpha$. Indeed, if $\beta$ is also a root, then $[H_\alpha,E_\beta]=n_{\beta,\alpha}E_\beta,[H_\alpha,F_\beta]=-n_{\beta,\alpha}F_\beta$. More generally, due to the fact that for any $\lambda\in\fk h^*\simeq\fk h$ we have
\begin{align*}
(H_\alpha|\lambda)=n_{\lambda,\alpha},
\end{align*}
if $v$ is a $\lambda$-weight vector, then
\begin{align*}
H_\alpha v=n_{\lambda,\alpha}v.
\end{align*}
We also have
\begin{align*}
(E_\alpha|E_\alpha)=(F_\alpha|F_\alpha)=\frac 2{(\alpha|\alpha)}.
\end{align*}
One can assume furthermore that
\begin{align*}
E_{-\alpha}=F_\alpha,\qquad F_{-\alpha}=E_{\alpha},\qquad H_{-\alpha}=-H_\alpha.
\end{align*}
See \cite{Gui19b} section 1.2 for more details.

$L_\gk(\lambda_4)$ is $26$-dimensional and has $25$ weights: the zero weight,  the twelve roots in group A and group B,  and there negatives. If a weight $\mu$ of $L_\gk(\lambda_4)$ is not zero, then $\dim L_\gk(\lambda_4)[\mu]=1$. On the other hand, $\dim L_\gk(\lambda_4)[0]=2$. These facts can be checked by LieART (see section \ref{lb14}).

The main result of this article is:
\begin{thm}\label{lb2}
Any irreducible intertwining operator of $V^l_{\fk f_4}$ with charge space $L_{\fk f_4}(\lambda_4,l)$ is energy-bounded.
\end{thm}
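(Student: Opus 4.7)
The plan is to combine the compression (induction-in-level) method of \cite{Was98,TL04,Gui19b} with the conformal embeddings sketched in the introduction, and use Proposition \ref{lb16} as a lever. Since the target $\mc Y$ is irreducible with charge space $L_{\fk f_4}(\lambda_4,l)$, Proposition \ref{lb16} reduces the entire problem to exhibiting a single nonzero homogeneous $w^{(i)}\in L_{\fk f_4}(\lambda_4,l)$ for which $\mc Y(w^{(i)},z)$ is energy-bounded. All subsequent work consists in producing such a vector, either directly from a conformal embedding (base cases) or by compressing an intertwiner of a larger VOA (inductive step).

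First I would handle the base cases $l=1$ and $l=2$ using the two conformal embeddings
\begin{gather*}
V^1_{\fk{sl}_2}\otimes V^1_{\fk{sp}_6}\subset V^1_{\fk f_4},\\
\Vir^{c_9}\otimes V^2_{\fk{sl}_2}\otimes V^2_{\fk{sp}_6}\subset V^2_{\fk f_4}.
\end{gather*}
Viewing $V^l_{\fk f_4}$-modules as modules over the subalgebra on the left and decomposing accordingly, any $V^l_{\fk f_4}$-intertwiner restricts on each irreducible subalgebra summand of its charge space to an intertwiner of the subalgebra VOA. Such intertwiners are energy-bounded by prior results on types $A$ and $BCD$ and on Virasoro VOAs, combined with the fact that tensor products of energy-bounded intertwiners remain energy-bounded. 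Picking a nonzero primary vector in a suitable summand of $L_{\fk f_4}(\lambda_4,l)$ and applying Proposition \ref{lb16} settles $l\le 2$.

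For $l\ge 3$ I would induct on $l$ via diagonal embeddings $V^l_{\fk f_4}\subset V^m_{\fk f_4}\otimes V^{l-m}_{\fk f_4}$ with $m\in\{1,2\}$. The intertwiners of $V^m_{\fk f_4}\otimes V^{l-m}_{\fk f_4}$ are tensor products of $V^m_{\fk f_4}$- and $V^{l-m}_{\fk f_4}$-intertwiners, hence energy-bounded by the base cases together with the inductive hypothesis. Following the strategy of Section \ref{lb19}, modeled on case (II) of \cite{Gui19b} section 5.2, one shows that for each of the four types of irreducible $V^l_{\fk f_4}$-intertwiners isolated in Proposition \ref{lb13} the restriction (compression) of a suitable $V^m_{\fk f_4}\otimes V^{l-m}_{\fk f_4}$-intertwiner to a $V^l_{\fk f_4}$-submodule is nonzero on a chosen primary vector. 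Proposition \ref{lb16} then upgrades energy-boundedness at this single vector to energy-boundedness of the full intertwiner.

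The main obstacle is precisely the nonvanishing step in the inductive argument. Unlike every other simple Lie type except $E_8$, $L_{\fk f_4}(\lambda_4)$ has a two-dimensional zero weight space, so the fusion multiplicities attached to $\lambda_4\otimes\lambda_4$ and $\lambda_4\otimes\lambda_3$ exceed one and a naive choice of primary vector can produce a vanishing compression. The antidote is Lemma \ref{lb1}, which pins down the distinguished basis $\{F_{\rho_3}v_{\rho_3},\,F_{\rho_4}v_{\rho_4}\}$ of $L_{\fk f_4}(\lambda_4)[0]$ characterized by annihilation under the $\gk$-morphisms in $\Hom_\gk(\lambda_4\otimes\lambda_4,\lambda_4)$ and $\Hom_\gk(\lambda_4\otimes\lambda_3,\lambda_3)$ respectively; together with a reduction showing that $\Hom_\gk(\lambda_4\otimes\lambda,\lambda)$ for arbitrary dominant $\lambda$ is controlled by these two basic pieces, this feeds the generalizations of \cite{Gui19b} Lemma 2.15 and the proof of \cite{Gui19b} Lemma 5.6, namely Proposition \ref{lb9} and Lemma \ref{lb12}, that form the technical engine of the compression step.
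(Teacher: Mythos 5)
Your base cases are correct and coincide with the paper's Proposition \ref{lb4} (decompose via Lemma \ref{lb20} and \cite{ADL05}, quote energy bounds for $V^l_{\fk{sl}_2}$, $V^l_{\fk{sp}_6}$, $\Vir^{c_9}$, then apply Proposition \ref{lb16}). The genuine gap is in your step for $l\geq 3$: you propose an induction on $l$ via $V^l_{\fk f_4}\subset V^m_{\fk f_4}\otimes V^{l-m}_{\fk f_4}$ in which the second tensor factor carries a $V^{l-m}_{\fk f_4}$-intertwiner that is energy-bounded ``by the inductive hypothesis.'' This is not how the argument can be made to close, and the paper explicitly flags (point 2 of the introduction) that the level-reduction induction of \cite{Was98,TL04,Gui19b} appears intractable for $\fk f_4$ --- which is precisely why level $2$ had to be treated separately by a conformal embedding. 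What the paper does instead is non-inductive: first, by \cite{Gui19b} proposition 2.14 together with self-duality of $L_\gk(\lambda_4,l)$ and the adjoint trick, one reduces to types ${\nu\choose\mu}$ with $(\mu|\theta)=(\nu|\theta)=l$, so that $\nu-\mu$ is $0$ or a group-A root; second, Propositions \ref{lb7} and \ref{lb8} reduce these to nine \emph{fundamental types} at fixed levels $\leq 4$. In the compression realizing this reduction the operator on the second factor is the module vertex operator $Y_\rho$ of $L_\gk(\rho,l-k)$, which is energy-bounded simply because affine VOAs are strongly energy-bounded --- no inductive hypothesis about level-$(l-m)$ intertwiners is needed, and indeed none would be available, since the charge space of $\fk Y=\mc Y\otimes Y_\rho$ is $L_\gk(\lambda_4,k)\otimes L_\gk(0,l-k)$, i.e.\ the vacuum sits on the second factor. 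Your plan never explains how an \emph{arbitrary} pair $(\mu,\nu)$ at level $l$ is covered: the nonvanishing condition (d) of Proposition \ref{lb9} and Lemma \ref{lb12} are only verified, and plausibly only verifiable, for the finitely many fundamental types, so without the reduction-to-fundamental-types step your induction has no base of concrete cases to stand on.

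A second, smaller gap concerns the multiplicity-two type. Your nonvanishing criterion (``the compression is nonzero on a chosen primary vector'') suffices only when $N^\nu_\mu=1$. For type ${\lambda_3+\lambda_4\choose\lambda_3+\lambda_4}$ at level $3$ (case (7)), where $N^\nu_\mu=2$, one must produce \emph{two} energy-bounded intertwiners spanning $\mc V_\gk^3{\lambda_3+\lambda_4\choose\lambda_4~\lambda_3+\lambda_4}$: the paper compresses $\mc Y_1\otimes Y_{\lambda_4}$ (level $2+1$) and $\mc Y_2\otimes Y_{\lambda_3}$ (level $1+2$), and Lemma \ref{lb1} yields linear independence because $\Gamma\Psi\wtd{\mc Y}_1$ kills $F_{\rho_4}v_{\rho_4}$ while $\Gamma\Psi\wtd{\mc Y}_2$ kills $F_{\rho_3}v_{\rho_3}$ (Proposition \ref{lb18}). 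Relatedly, your statement that ``the fusion multiplicities attached to $\lambda_4\otimes\lambda_4$ and $\lambda_4\otimes\lambda_3$ exceed one'' is false: by Lemma \ref{lb5} one has $N^{\lambda_4}_{\lambda_4}=N^{\lambda_3}_{\lambda_3}=1$; the two-dimensionality of $L_\gk(\lambda_4)[0]$ produces $N^\mu_\mu=2$ only when $n_3,n_4>0$, e.g.\ $\mu=\lambda_3+\lambda_4$. You correctly identify Lemma \ref{lb1}, Proposition \ref{lb9} and Lemma \ref{lb12} as the technical engine, but as proposed the argument neither reaches general $(\mu,\nu,l)$ nor handles the fusion-rule-two case.
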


\section{The case $l=1,2$}\label{lb17}

In this section, we prove theorem \ref{lb2} when $l=1,2$. The proof for level $1$ is similar but slightly simpler than level $2$. So we will mainly focus on level $2$. $\gk=\fk f_4$ has a unitary Lie subalgebras $\fk a_1$ and $\fk c_3$ where $\fk a_1$ is generated by the raising and the lowering operators of $\lambda_1$ and $\fk c_3$ is generated by those of $\rho_2,\rho_3,\rho_4$. Then $\fk a_1$ and $\fk c_3$ are simple Lie algebras of type $A_1,C_3$ respectively, i.e., $\fk a_1=\fk{sl}_2,\fk c_3=\fk{sp}_6$. By the fact that $\rho_2\pm\lambda_1,\rho_3\pm\lambda_1,\rho_2\pm\lambda_1$ are not roots of $\fk f_4$ since there lengths exceed $\sqrt 2$, we have $[\fk a_1,\fk c_3]=0$, which shows that there is an embedding of unitary Lie algebras $\fk a_1\oplus \fk c_3\subset \fk f_4$. Moreover, the long roots of $\fk a_1,\fk c_3$ both have lengths $\sqrt 2$ under the normalized invariant inner product of $\gk$. Thus, the Dynkin indexes of $\fk a_1\subset\fk f_4$ and $\fk c_3\subset\fk f_4$ are both $1$, and one thus have unitary vertex operator subalgebras
\begin{align*}
V^l_{\fk a_1}\otimes V^l_{\fk c_3}\subset V^l_{\fk f_4}.
\end{align*}
(See, for example, \cite{Gui19b} section 2.1.) By comparing the central charges, one sees that when $l=1$, the above subalgebra is a conformal subalgebra, i.e., both sides have the same central charge and hence the same conformal vector. When $l=2$, the difference between the two central charges is $c_9=1-\frac 6{9\cdot 10}$. Thus we have conformal subalgebras
\begin{gather*}
V^1_{\fk a_1}\otimes V^1_{\fk c_3}\subset V^1_{\fk f_4},\\
\Vir^{c_9}\otimes V^2_{\fk a_1}\otimes V^2_{\fk c_3}\subset V^2_{\fk f_4},
\end{gather*} 
where $\Vir^{c_9}$ is the unitary Virasoro VOA with central charge $c_9$. By \cite{DLM97}, unitary affine VOAs and unitary Virasoro VOAs with central charge $c<1$ are regular. So $\Vir^{c_9}\otimes V^2_{\fk a_1}\otimes V^2_{\fk c_3}$ is regular. Thus any unitary $V^2_{\fk f_4}$-module, as a unitary $\Vir^{c_9}\otimes V^2_{\fk a_1}\otimes V^2_{\fk c_3}$-module, has a finite orthogonal irreducible decomposition, and each irreducible component has the form $W_1\otimes W_2\otimes W_3$, where $W_1,W_2,W_3$ are irreducible representations of $\Vir^{c_9},V^2_{\fk a_1},V^2_{\fk c_3}$ respectively. (Cf. \cite{Ten19} proposition 2.20.)

\begin{lm}\label{lb20}
$L_{\fk f_4}(\lambda_4,2)$ has an irreducible unitary $\Vir^{c_9}\otimes V^2_{\fk a_1}\otimes V^2_{\fk c_3}$-submodule
\begin{align}
W_1\otimes L_{\fk a_1}(\Box,2)\otimes L_{\fk c_3}(\vartheta_1,2),\label{eq3}
\end{align}
where $W_1$ is an irreducible unitary $\Vir^{c_9}$-module, $\Box$ is the highest weight of the ($2$-dimensional) vector representation of  $\fk{sl}_2$, and $\vartheta_1$ is the highest weight of the ($6$-dimensional) vector representation of  $\fk{sp}_6$.
\end{lm}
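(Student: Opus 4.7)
The plan is to combine the conformal embedding $\Vir^{c_9}\otimes V^2_{\fk a_1}\otimes V^2_{\fk c_3}\subset V^2_{\fk f_4}$ with the $\fk a_1\oplus\fk c_3$-branching of the $26$-dimensional $\fk f_4$-representation $L_{\fk f_4}(\lambda_4)$. Since the embedding is conformal, $L_0$ is the same operator whether we view $L_{\fk f_4}(\lambda_4,2)$ as a $V^2_{\fk f_4}$-module or as a $\Vir^{c_9}\otimes V^2_{\fk a_1}\otimes V^2_{\fk c_3}$-module, so its lowest-energy subspace in either sense is the $26$-dimensional space $L_{\fk f_4}(\lambda_4)$.

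The key Lie-theoretic input I will use, easily verified in LieART as in Section \ref{lb6}, is the branching rule
\begin{align*}
L_{\fk f_4}(\lambda_4)\;\cong\;\bigl(L_{\fk a_1}(\Box)\otimes L_{\fk c_3}(\vartheta_1)\bigr)\,\oplus\,L_{\fk c_3}(\vartheta_2)
\end{align*}
as an $\fk a_1\oplus\fk c_3$-module, with matching dimensions $2\cdot 6+14=26$. In particular the lowest-energy subspace contains a nonzero copy of $L_{\fk a_1}(\Box)\otimes L_{\fk c_3}(\vartheta_1)$.

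Next, by the regularity recalled just before the lemma, I will decompose $L_{\fk f_4}(\lambda_4,2)=\bigoplus_i W_1^i\otimes W_2^i\otimes W_3^i$ as a finite direct sum of irreducible unitary $\Vir^{c_9}\otimes V^2_{\fk a_1}\otimes V^2_{\fk c_3}$-modules. The lowest-energy subspace of each summand is the one-dimensional lowest-energy subspace of the Virasoro factor tensored with the lowest-energy (equivalently, highest-weight) subspaces of $W_2^i$ and $W_3^i$, hence is of type $L_{\fk a_1}(h_{W_2^i})\otimes L_{\fk c_3}(h_{W_3^i})$ as an $\fk a_1\oplus\fk c_3$-module, where $h_{W_j^i}$ denotes the highest weight of $W_j^i$. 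Grouping all summands with $(W_2^i,W_3^i)\cong(L_{\fk a_1}(\Box,2),L_{\fk c_3}(\vartheta_1,2))$ yields an isotypic component of the form $U\otimes L_{\fk a_1}(\Box,2)\otimes L_{\fk c_3}(\vartheta_1,2)$ with $U$ a $\Vir^{c_9}$-module. The branching rule forces the $L_{\fk a_1}(\Box)\otimes L_{\fk c_3}(\vartheta_1)$ piece of $L_{\fk f_4}(\lambda_4)$ to lie inside this isotypic component, so $U\neq 0$.

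Finally, since $\Vir^{c_9}$ is regular, $U$ decomposes further into a finite direct sum of irreducible unitary Virasoro modules; choosing $W_1$ to be any such irreducible summand produces the desired irreducible submodule $W_1\otimes L_{\fk a_1}(\Box,2)\otimes L_{\fk c_3}(\vartheta_1,2)$ of $L_{\fk f_4}(\lambda_4,2)$. The only non-routine step is the branching computation, but this is standard and easily handled by LieART; everything else is a direct application of the regularity of the conformal subalgebra.
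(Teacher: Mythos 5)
Your proof is correct, and its skeleton matches the paper's: both exploit the conformality of the embedding (so $L_0$ is the same operator), the regularity of $\Vir^{c_9}\otimes V^2_{\fk a_1}\otimes V^2_{\fk c_3}$ to obtain a finite decomposition into irreducibles $W_1\otimes W_2\otimes W_3$, and the presence of a copy of $L_{\fk a_1}(\Box)\otimes L_{\fk c_3}(\vartheta_1)$ inside the lowest-energy subspace $L_{\fk f_4}(\lambda_4)$. Where you diverge is the matching step: the paper takes the \emph{weak} $V^2_{\fk a_1}\otimes V^2_{\fk c_3}$-submodule generated by that copy, identifies it with $L_{\fk a_1}(\Box,2)\otimes L_{\fk c_3}(\vartheta_1,2)$, and compares it with the restriction of each summand (a direct sum of copies of $W_2\otimes W_3$), whereas you stay inside the finite-dimensional lowest-energy subspace and compare $\fk a_1\oplus\fk c_3$-isotypic components. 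Your version goes through, with two caveats. First, your ``forces'' step tacitly uses that, since every summand has minimal energy $\geq\Delta_{\lambda_4}$, the energy-$\Delta_{\lambda_4}$ eigenspace of a summand is either zero or exactly its lowest-energy subspace, so the projection of the $(\Box,\vartheta_1)$-copy to any summand lands in an irreducible space of the form $(\text{1-dim})\otimes L_{\fk a_1}(h_2)\otimes L_{\fk c_3}(h_3)$ and Schur's lemma applies; this is routine but should be said. Second, your branching rule is correct as stated (the $14$-dimensional piece must indeed be $L_{\fk c_3}(\vartheta_2)$ rather than $L_{\fk c_3}(\vartheta_3)$, since $\dim L_{\fk f_4}(\lambda_4)[0]=2$ and $\vartheta_3$ has no zero weight), but calling it ``the key Lie-theoretic input'' overstates it: all your argument uses is the existence of the $(\Box,\vartheta_1)$-component, which, as the paper observes, is immediate from the fact that the restriction of $\lambda_4$ to the Cartan subalgebras of $\fk a_1$ and $\fk c_3$ is $(\Box,\vartheta_1)$ --- no LieART computation is needed, and the complementary $L_{\fk c_3}(\vartheta_2)$ plays no role.
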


The case of level $1$ is similar and is left to the reader.

\begin{proof}
The lowest energy subspace of $L_{\fk f_4}(\lambda_4,2)$ is $L_{\fk f_4}(\lambda_4)$. Since the restrictions of $\lambda_4$ to the Cartan subalgebras of $\fk a_1$ and $\fk c_3$ equal $\Box$ and $\vartheta_1$ respectively, the $\fk f_4$-module  $L_{\fk f_4}(\lambda_4)$ has an irreducible $(\fk a_1\oplus\fk c_3)$-submodule $L_{\fk a_1}(\Box)\otimes L_{\fk c_3}(\vartheta_1)$. Thus, the weak $V_{\fk a_1}^2\otimes V_{\fk c_3}^2$-submodule of $L_{\fk f_4}(\lambda_4,2)$ generated by $L_{\fk a_1}(\Box)\otimes L_{\fk c_3}(\vartheta_1)$ is equivalent to $L_{\fk a_1}(\Box,2)\otimes L_{\fk c_3}(\vartheta_1,2)$. Recall that $L_{\fk f_4}(\lambda_4,2)$ is a finite sum of irreducible  $\Vir^{c_9}\otimes V^2_{\fk a_1}\otimes V^2_{\fk c_3}$-submodules of the form $W_1\otimes W_2\otimes W_3$ which, as a weak $V_{\fk a_1}^2\otimes V_{\fk c_3}^2$-module, is a direct sum of $W_2\otimes W_3$. Therefore, there must be an irreducible submodule $W_1\otimes W_2\otimes W_3$ with $W_2\otimes W_3\simeq L_{\fk a_1}(\Box,2)\otimes L_{\fk c_3}(\vartheta_1,2)$.
\end{proof}

\begin{pp}\label{lb4}
Theorem \ref{lb2} holds when $l$ equals $1$ or $2$.
\end{pp}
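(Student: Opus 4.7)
The plan is to exploit the conformal embeddings of Section \ref{lb17} in order to reduce the energy-bound condition for $\mathcal Y$ to already-established ones for intertwining operators of the smaller sub-VOAs, via Proposition \ref{lb16}. I will focus on $l=2$, since $l=1$ is strictly easier (no Virasoro factor is involved). Let $\mathcal Y$ be an irreducible $V^2_{\fk f_4}$-intertwining operator with charge space $L_{\fk f_4}(\lambda_4,2)$, source $W_j$, and target $W_k$. By Proposition \ref{lb16} it suffices to exhibit a single homogeneous non-zero vector $w^{(i)}\in L_{\fk f_4}(\lambda_4,2)$ for which $\mathcal Y(w^{(i)},z)$ is energy-bounded. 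I choose such $w^{(i)}$ from within the $\Vir^{c_9}\otimes V^2_{\fk a_1}\otimes V^2_{\fk c_3}$-submodule $W_1\otimes L_{\fk a_1}(\Box,2)\otimes L_{\fk c_3}(\vartheta_1,2)$ supplied by Lemma \ref{lb20}, for example as a tensor product of $L_0$-eigenvectors of the three factors.

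Next, using regularity of $\Vir^{c_9}\otimes V^2_{\fk a_1}\otimes V^2_{\fk c_3}$, I decompose $W_j$ and $W_k$ into finite orthogonal direct sums of irreducibles of tensor product form, say $M_s^{(1)}\otimes M_s^{(2)}\otimes M_s^{(3)}$ and $N_t^{(1)}\otimes N_t^{(2)}\otimes N_t^{(3)}$ respectively. For each pair $(s,t)$, restricting $\mathcal Y(w^{(i)},z)$ to the $s$-th summand of $W_j$ followed by projection onto the $t$-th summand of $W_k$ is a $\Vir^{c_9}\otimes V^2_{\fk a_1}\otimes V^2_{\fk c_3}$-intertwining operator whose charge, source, and target are all irreducible and of tensor product form. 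By the Frenkel--Huang--Lepowsky type decomposition of intertwining operator spaces for a tensor product VOA (applied inductively to three factors), every such component is a finite linear combination of tensor products $\mathcal Y^{(1)}\otimes \mathcal Y^{(2)}\otimes \mathcal Y^{(3)}$, with $\mathcal Y^{(1)}$ a $\Vir^{c_9}$-intertwining operator having charge space $W_1$, $\mathcal Y^{(2)}$ a $V^2_{\fk a_1}$-intertwining operator having charge space $L_{\fk a_1}(\Box,2)$, and $\mathcal Y^{(3)}$ a $V^2_{\fk c_3}$-intertwining operator having charge space $L_{\fk c_3}(\vartheta_1,2)$.

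The final step is to invoke known energy bounds for each factor: every irreducible intertwining operator of the rational unitary Virasoro VOA $\Vir^{c_9}$ (which has $c<1$) is energy-bounded; intertwining operators of $V^2_{\fk{sl}_2}$ with charge space the vector representation $L_{\fk a_1}(\Box,2)$ are energy-bounded by Wassermann \cite{Was98} (all type-$A$ intertwining operators are in fact energy-bounded, see \cite{Gui20b}); intertwining operators of $V^2_{\fk{sp}_6}$ with charge space the vector representation $L_{\fk c_3}(\vartheta_1,2)$ are energy-bounded by \cite{Gui19b}. Since finite sums and tensor products of energy-bounded intertwining operators remain energy-bounded, and since the conformal nature of the embedding guarantees that the $L_0$-operator of $V^2_{\fk f_4}$ coincides with that of $\Vir^{c_9}\otimes V^2_{\fk a_1}\otimes V^2_{\fk c_3}$ on every unitary $V^2_{\fk f_4}$-module, $\mathcal Y(w^{(i)},z)$ is energy-bounded, and Proposition \ref{lb16} then yields the theorem. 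The main delicate point I expect to require care is the Frenkel--Huang--Lepowsky-type tensor decomposition of the intertwining operator space for a triple tensor product VOA together with its compatibility with unitarity, but this should follow by induction from the two-factor case using the regularity and unitarity of each factor.
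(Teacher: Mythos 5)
Your proposal is correct and takes essentially the same route as the paper's proof: restrict the charge space of $\mathcal Y$ to the $\Vir^{c_9}\otimes V^2_{\fk a_1}\otimes V^2_{\fk c_3}$-submodule of Lemma \ref{lb20}, decompose the restricted operator into a finite sum of tensor-product intertwining operators (the paper cites \cite{ADL05} Theorem 2.10 for exactly the decomposition you attribute to a Frenkel--Huang--Lepowsky-type argument), and then quote the known energy bounds of \cite{Loke94} for $\Vir^{c_9}$, \cite{Was98} for $V^2_{\fk a_1}$, and \cite{Gui19b} for $V^2_{\fk c_3}$, closing with Proposition \ref{lb16}. Your extra step of decomposing the source and target modules and projecting componentwise is subsumed in the paper's single citation, and your remark that the conformal embedding makes the two $L_0$-operators agree is correct and implicit there.
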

\begin{proof}
We only discuss the case of $l=2$ since the other case can be treated in a similar way. Let $\mc Y$ be any intertwining operator of $V^2_{\fk f_4}$ with charge space $L_{\fk f_4}(\lambda_4,2)$. By proposition \ref{lb16}, it suffices to prove that $\mc Y(w,z)$ is energy-bounded for some non-zero homogeneous vector $w\in L_{\fk f_4}(\lambda_4,2)$. If we regard $\mc Y$ as an intertwining operator of $\Vir^{c_9}\otimes V^2_{\fk a_1}\otimes V^2_{\fk c_3}$, then, by the above lemma, one can restrict the charge space of $\mc Y$ of a charge subspace of the form \eqref{eq3}. Let $\mc Y_0$ be the restriction of $\mc Y$ to this charge subspace. Then it suffices to prove that $\mc Y_0$ is energy-bounded. 

By \cite{ADL05} theorem 2.10, $\mc Y_0$ is a finite sum of  intertwining operators of the form $\mc Y_1\otimes \mc Y_2\otimes \mc Y_3$ where $\mc Y_1,\mc Y_2,\mc Y_3$ are irreducible intertwining operators of $\Vir^{c_9},V^2_{\fk a_1},V^2_{\fk c_3}$ with charge spaces $W_1,L_{\fk a_1}(\Box,2),L_{\fk c_3}(\vartheta_1,2)$  respectively. By \cite{Gui19b} theorem 4.2, $\mc Y_3$ is energy-bounded. By \cite{Was98}, $\mc Y_2$ is energy-bounded. By \cite{Loke94} proposition IV.1.3, any intertwining operator of a unitary $c<1$ Virasoro VOA is energy-bounded.\footnote{This also follows from the fact that any intertwining operator of a type $A$ discrete series $W$-algebra is energy-bounded. See the introduction of \cite{Gui20b}.}  So $\mc Y_1$ is energy-bounded. By the arguments in \cite{CKLW18} section 6 or \cite{Gui19a} proposition 3.5, tensor products of energy-bounded intertwining operators are energy-bounded. So $\mc Y_0$ is energy-bounded.
\end{proof}

\begin{rem}
The method in this section can be used to prove a similar result for the level $1$ affine type $G_2$ VOA $V_{\gk_2}^1$. Let $\alpha_1=[\sqrt {2/3},0],\alpha_2=[-\sqrt {3/2},\sqrt {1/2}]$ be the simple roots of $\fk g_2$. Then $\alpha_3=[0,\sqrt 2]$ is  a root of $\gk$ with squared length $2$.  We have embedding $\fk a_1\oplus\fk a_1\subset\fk g_2$ with the first $\fk a_1$ generated by the raising and the lowering operators of $\alpha_1$, and the second one generated by those of $\alpha_3$. This embedding induces a conformal extension $V_{\fk a_1}^3\otimes V_{\fk a_1}^1\subset V_{\gk_2}^1$. Let $\varsigma=[\sqrt{1/6},\sqrt{1/2}]$ which is the highest weight of the $7$-dimensional irreducible $\gk_2$-module. Then the $V_{\gk_2}^1$-module $L_{\gk_2}(\varsigma,1)$ has an irreducible $V_{\fk a_1}^3\otimes V_{\fk a_1}^1$-submodule $L_{\fk a_1}(\Box,3)\otimes L_{\fk a_1}(\Box,1)$.   Since the intertwining operators of $V_{\fk a_1}^3\otimes V_{\fk a_1}^1$ with such charge space are energy-bounded, so are those of $ V_{\gk_2}^1$ with charge space $L_{\gk_2}(\varsigma,1)$.
\end{rem}

\section{The tensor (fusion) product rule $N^\nu_\mu$}

For each $\lambda\in P_+(\gk,l)$, we let $\Delta_\lambda$ be the conformal weight of $L_\gk(\lambda,l)$, i.e. $\Delta_\lambda$ is the smallest eigenvalue of $L_0$ on $L_\gk(\lambda,l)$. Note that $\Delta_\lambda$ depends on $l$. Indeed, we have $\Delta_\lambda=\frac{C_\lambda}{2(l+h^\vee)}$ where $h^\vee$ is the dual Coxeter number of $\gk$, and $C_\lambda$ is the Casimir number of $L_\gk(\lambda)$.

We denote by $\mc V_\gk^l{\nu\choose\lambda~\mu}$ the vector space of type ${\nu\choose\lambda~\mu}={L_\gk(\nu,l)\choose L_\gk(\lambda,l)~L_\gk(\mu,l)}$ intertwining operators of $V_\gk^l$, assuming that $\lambda,\mu,\nu$ are admissible at level $l$. We let $N^\nu_{\lambda\mu}$ be the dimension of this vector space. Since, in this article, we will be mainly interested in the case that $\lambda=\lambda_4$, we let
\begin{align}
N^\nu_\mu:=N^\nu_{\lambda_4\mu}=\dim\mc V^l_\gk{\nu\choose\lambda_4~\mu}.
\end{align}

Set
\begin{align*}
\Delta^\nu_{\lambda\mu}=\Delta_\lambda+\Delta_\mu-\Delta_\nu.
\end{align*}
We also write $\Hom_\gk(L_\gk(\lambda)\otimes L_\gk(\mu),L_\gk(\nu))$ as $\Hom_\gk(\lambda\otimes\mu,\nu)$ for short. Then for any $\mc Y\in\mc V_\gk^l{\nu\choose\lambda~\mu}$ and $u^{(\lambda)}\in L_\gk(\lambda),v^{(\mu)}\in L_\gk(\mu)$, we have $\mc Y(u^{(\lambda)})_{\Delta^\nu_{\lambda\mu}-1}\cdot u^{(\mu)}\in L_\gk(\nu)$. We define the linear map
\begin{align}\label{eq4}
\Psi:\mc V_\gk^l{\nu\choose\lambda~\mu}\rightarrow\Hom_\gk(\lambda\otimes\mu,\nu)
\end{align}
by sending each element $\mc Y$ to $\Psi\mc Y$ satisfying
\begin{gather*}
\Psi\mc Y(u^{(\lambda)}\otimes u^{(\mu)})=\mc Y(u^{(\lambda)})_{\Delta^\nu_{\lambda\mu}-1}\cdot u^{(\mu)}.
\end{gather*}
It is well known that $\Psi$ is injective. Moreover, if $(\lambda|\theta)=1$ (for example, if $\lambda=\lambda_4$) then $\Psi$ is also surjective. (See \cite{Fuc94} (5.9).) In this case, the fusion rule agrees with the truncated tensor product rule:
\begin{align*}
N^\nu_\lambda=\dim\Hom_\gk(\lambda_4\otimes\mu,\nu).
\end{align*}
In particular, $N^\nu_\mu$ is independent of the level $l$ at which  $\mu,\nu$ are admissible.

Thus, to study the dimension of $\mc V_\gk^l{\nu\choose\lambda_4~\mu}$, it suffices to understand the tensor product rules of $L_\gk(\lambda_4)$. Let $v_\mu\in L_\gk(\mu)[\mu],v_\nu\in  L_\gk(\nu)[\nu]$ be (non-zero) highest weight vectors of $L_\gk(\mu),L_\gk(\nu)$ respectively.  Define a linear map
\begin{gather}
\Gamma:\Hom_\gk(\lambda\otimes\mu,\nu)\rightarrow L_\gk(\lambda)[\nu-\mu]^*,\label{eq5}
\end{gather}
such that for any $T\in\Hom_\gk(\lambda\otimes\mu,\nu)$, $\Gamma T$, as a linear functional on $L_\gk(\lambda)[\nu-\mu]$, is defined by
\begin{align}
(\Gamma T)(u^{(\lambda)})=\bk{T(u^{(\lambda)}\otimes v_\mu)|v_\nu}
\end{align}
for any $u^{(\lambda)}\in L_\gk(\lambda)[\nu-\mu]$. Then $\Gamma$ is injective.

The image of $\Gamma$ can be described as follows. Let $K_\gk^\mu(\lambda)[\nu-\mu]$ be the subspace of $L_\gk(\lambda)[\nu-\mu]$ spanned by vectors of the form $F_\alpha^{n_{\mu,\alpha}+1}u^{(\lambda)}$, where $\alpha$ is a simple root of $\gk$, and $u^{(\lambda)}\in L_\gk(\lambda)$ has weight $\nu-\mu+(n_{\mu,\alpha}+1)\alpha$. Denote by $K_\gk^\mu(\lambda)[\nu-\mu]^\perp$ the set of elements of $L_\gk(\lambda)[\nu-\mu]^*$ vanishing on $K_\gk^\mu(\lambda)[\nu-\mu]$. Then by \cite{Gui19b} proposition 1.17, we have
\begin{align}
\boxed{~~\mathrm{Im}(\Gamma)=K_\gk^\mu(\lambda)[\nu-\mu]^\perp.~~}\label{eq1}
\end{align}
Thus
\begin{align}
N^\nu_{\lambda\mu}=\dim L_\gk(\lambda)[\nu-\mu]-\dim K_\gk^\mu(\lambda)[\nu-\mu].\label{eq2}
\end{align}

For example, fix non-zero vectors
\begin{align}
v_{\rho_3}\in L_\gk(\lambda_4)[\rho_3],\qquad v_{\rho_4}\in L_\gk(\lambda_4)[\rho_4].\label{eq7}
\end{align}
Since we know that a simple root $\alpha$ must be one of $\rho_1,\rho_2,\rho_3,\rho_4$, it is easy to see that
\begin{gather}
K_\gk^{\lambda_3}(\lambda_4)[0]=\mbb C\cdot F_{\rho_4}v_{\rho_4},\qquad K_\gk^{\lambda_4}(\lambda_4)[0]=\mbb C\cdot F_{\rho_3}v_{\rho_3}.
\end{gather}
As we shall see immediately, these two vectors are non-zero and (indeed) linearly independent. Thus we have $N^{\lambda_3}_{\lambda_3}=N^{\lambda_4}_{\lambda_4}=1$. More generally, we have:

\begin{lm}\label{lb5}
Let $\mu=n_1\lambda_1+n_2\lambda_2+n_3\lambda_3+n_4\lambda_4$ be a dominant integral weight.

(a) If $n_3=n_4=0$, then $N^\mu_\mu=0$.

(b) If $n_3>0$ and $n_4=0$, then $N^\mu_\mu=1$ and $K_\gk^\mu(\lambda_4)[0]=\mbb C\cdot F_{\rho_4}v_{\rho_4}$.

(c) If $n_3=0$ and $n_4>0$, then $N^\mu_\mu=1$ and $K_\gk^\mu(\lambda_4)[0]=\mbb C\cdot F_{\rho_3}v_{\rho_3}$.

(d) If $n_3,n_4>0$, then $N^\mu_\mu=2$.
\end{lm}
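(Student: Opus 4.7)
The plan is to reduce the computation of $N^\mu_\mu$ to a dimension count via formula \eqref{eq2}. Since $\lambda = \lambda_4$ and $\nu = \mu$, we get
\begin{equation*}
N^\mu_\mu = \dim L_\gk(\lambda_4)[0] - \dim K_\gk^\mu(\lambda_4)[0] = 2 - \dim K_\gk^\mu(\lambda_4)[0],
\end{equation*}
using $\dim L_\gk(\lambda_4)[0]=2$ from Section \ref{lb6}. Thus the entire lemma amounts to determining a spanning set of $K_\gk^\mu(\lambda_4)[0]$ in each of the four cases and, when two spanning vectors arise, verifying they are linearly independent.

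First I would enumerate the contributions to $K_\gk^\mu(\lambda_4)[0]$. By definition it is spanned by the vectors $F_\alpha^{n_{\mu,\alpha}+1}u^{(\lambda_4)}$ with $\alpha\in\{\rho_1,\rho_2,\rho_3,\rho_4\}$ simple and $u^{(\lambda_4)}\in L_\gk(\lambda_4)[(n_{\mu,\alpha}+1)\alpha]$. The duality between fundamental weights and simple coroots gives $n_{\mu,\rho_i}=n_i$. Now every non-zero weight of $L_\gk(\lambda_4)$ has length $1$, so $(n_i+1)\rho_i$ can qualify only when $\rho_i$ is short and $(n_i+1)|\rho_i|=1$. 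For $i=1,2$ we have $|\rho_i|=\sqrt 2$, so no contribution; for $i=3,4$ we have $|\rho_i|=1$ and the unique possibility is $n_i=0$, in which case the weight space $L_\gk(\lambda_4)[\rho_i]$ is one-dimensional, spanned by $v_{\rho_i}$, producing the single generator $F_{\rho_i}v_{\rho_i}$. This identifies the spanning sets in all four cases.

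Next I would verify non-vanishing. For $i\in\{3,4\}$ the weight $2\rho_i$ has length $2$ and is therefore not a weight of $L_\gk(\lambda_4)$, so $E_{\rho_i}v_{\rho_i}=0$. Combined with $H_{\rho_i}v_{\rho_i}=n_{\rho_i,\rho_i}v_{\rho_i}=2v_{\rho_i}$, the $\fk{sl}_2$-triple $\bk{E_{\rho_i},F_{\rho_i},H_{\rho_i}}$ forces $v_{\rho_i}$ to generate a three-dimensional irreducible in which $F_{\rho_i}v_{\rho_i}$ realises the weight $0$ slot, hence is non-zero. This immediately settles cases (b) and (c), where $\dim K_\gk^\mu(\lambda_4)[0]=1$ and therefore $N^\mu_\mu=1$. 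Case (d) is then automatic: no simple root contributes, so $K_\gk^\mu(\lambda_4)[0]=0$ and $N^\mu_\mu=2$.

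The remaining case (a) requires showing $F_{\rho_3}v_{\rho_3}$ and $F_{\rho_4}v_{\rho_4}$ are linearly independent, equivalently that together they span the full two-dimensional space $L_\gk(\lambda_4)[0]$; this yields $\dim K_\gk^\mu(\lambda_4)[0]=2$ and hence $N^\mu_\mu=0$. This linear independence is precisely Lemma \ref{lb1}, identified in the introduction as the technical heart of the paper. I expect this to be the only real obstacle: the enumeration in the first step is bookkeeping driven by length considerations, and the non-vanishing argument is a routine $\fk{sl}_2$ computation, but linear independence cannot be read off the weight multiplicities alone and requires genuine information about the $\fk f_4$-module structure of $L_\gk(\lambda_4)$ (which the two proofs of Lemma \ref{lb1} supply). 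Taking Lemma \ref{lb1} as input, case (a) follows and the lemma is complete.
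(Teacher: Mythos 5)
Your proposal is correct and takes essentially the same route as the paper: the same reduction $N^\mu_\mu=2-\dim K_\gk^\mu(\lambda_4)[0]$ via \eqref{eq2}, the same length/weight enumeration showing that only $\alpha\in\{\rho_3,\rho_4\}$ with $n_{\mu,\alpha}=n_i=0$ can contribute (since $\rho_1,\rho_2$ and all multiples $(n_i+1)\rho_i$ with $n_i\geq 1$ fail to be weights of $L_\gk(\lambda_4)$), and the same four-case analysis. The only structural difference is that the paper proves Lemma \ref{lb1} \emph{inside} this very proof (specializing case (a) to $\mu=0$ and using the triviality of $\Hom_\gk(\lambda_4\otimes 0,0)$ together with \eqref{eq2} to force $K_\gk^0(\lambda_4)[0]=L_\gk(\lambda_4)[0]$), whereas you invoke Lemma \ref{lb1} as a black box — which is non-circular only because the appendix supplies an independent proof of it — and your explicit $\fk{sl}_2$ non-vanishing check for $F_{\rho_i}v_{\rho_i}$ is a small self-contained addition that the paper instead obtains as a byproduct of Lemma \ref{lb1}.
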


We shall prove this lemma together with the following one:

\begin{lm}\label{lb1}
	$F_{\rho_3}v_{\rho_3}$ and $F_{\rho_4}v_{\rho_4}$ form a basis of $L_\gk(\lambda_4)[0]$.
\end{lm}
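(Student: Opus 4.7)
I would prove Lemma \ref{lb1} by showing that $F_{\rho_3}v_{\rho_3}$ and $F_{\rho_4}v_{\rho_4}$ are each nonzero and linearly independent; since $L_\gk(\lambda_4)[0]$ is two-dimensional, this suffices.

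The non-vanishing is short. For $i\in\{3,4\}$, using $F_{\rho_i}^*=E_{\rho_i}$ and $[E_{\rho_i},F_{\rho_i}]=H_{\rho_i}$,
\begin{align*}
\|F_{\rho_i}v_{\rho_i}\|^2 = \bk{v_{\rho_i}|E_{\rho_i}F_{\rho_i}v_{\rho_i}} = \bk{v_{\rho_i}|H_{\rho_i}v_{\rho_i}} + \|E_{\rho_i}v_{\rho_i}\|^2.
\end{align*}
Since $2\rho_i$ has length $2$ while the nonzero weights of $L_\gk(\lambda_4)$ all have length $1$, we get $E_{\rho_i}v_{\rho_i}=0$, so $\|F_{\rho_i}v_{\rho_i}\|^2 = n_{\rho_i,\rho_i}\|v_{\rho_i}\|^2 = 2\|v_{\rho_i}\|^2 > 0$.

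For linear independence, my preferred approach exploits the conformal embedding $\fk a_1\oplus\fk c_3\subset\fk f_4$ from Section \ref{lb17}. Matching dimensions ($26=2\cdot 6+14$) and highest weights forces the branching
\begin{align*}
L_{\fk f_4}(\lambda_4)|_{\fk a_1\oplus\fk c_3} \simeq L_{\fk a_1}(\Box)\otimes L_{\fk c_3}(\vartheta_1) \oplus L_{\fk a_1}(0)\otimes L_{\fk c_3}(\vartheta_2).
\end{align*}
Since $(\rho_3|\lambda_1)=(\rho_4|\lambda_1)=0$, both $v_{\rho_3}$ and $v_{\rho_4}$ have trivial $\fk a_1$-weight, and since $L_{\fk a_1}(\Box)$ has no weight-zero subspace, both vectors lie entirely in the second summand; moreover $F_{\rho_3},F_{\rho_4}\in\fk c_3$. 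The problem thus reduces to the analogous statement inside the $14$-dimensional $\fk{sp}_6$-representation $L_{\fk c_3}(\vartheta_2)$, which I realize as the traceless antisymmetric square $\Lambda^2\Cbb^6/\Cbb\omega^*$ of the defining representation, with $\omega^*=\sum_i e_i\wedge f_i$. Identifying the short $\fk{sp}_6$-roots gives $\rho_3=\epsilon_2-\epsilon_3$ and $\rho_4=\epsilon_1-\epsilon_2$, so I may take $v_{\rho_3}=e_2\wedge f_3$ and $v_{\rho_4}=e_1\wedge f_2$, and then a direct computation yields
\begin{align*}
F_{\rho_3}v_{\rho_3}\equiv e_3\wedge f_3 - e_2\wedge f_2,\qquad F_{\rho_4}v_{\rho_4}\equiv e_2\wedge f_2 - e_1\wedge f_1 \pmod{\Cbb\omega^*}.
\end{align*}
Linear independence in the quotient is equivalent to linear independence of $(0,-1,1)$, $(-1,1,0)$, $(1,1,1)$ in $\Cbb^3$, which is true since the $3\times 3$ determinant equals $-3$.

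As an alternative, purely $\fk f_4$-theoretic approach, one can suppose $aF_{\rho_3}v_{\rho_3}+bF_{\rho_4}v_{\rho_4}=0$ and apply $E_{\rho_3}$ and $E_{\rho_4}$ separately, using $[E_{\rho_3},F_{\rho_4}]=[E_{\rho_4},F_{\rho_3}]=0$ (since $\rho_3-\rho_4$ has length $\sqrt 3$, not a root) and $E_{\rho_i}v_{\rho_i}=0$ to obtain a $2\times 2$ linear system in $(a,b)$. The principal obstacle of this route is pinning down the off-diagonal scalars $F_{\rho_4}E_{\rho_3}v_{\rho_4}=c\,v_{\rho_3}$ and $F_{\rho_3}E_{\rho_4}v_{\rho_3}=c'\,v_{\rho_4}$ enough to conclude that the determinant $4-cc'$ is nonzero. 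This can be done via structure constants of the $A_2$-subalgebra generated by $\rho_3,\rho_4$ (both short, with $(\rho_3|\rho_4)=-\tfrac12$), or by computing $\bk{F_{\rho_3}v_{\rho_3}|F_{\rho_4}v_{\rho_4}}=\bk{E_{\rho_3}v_{\rho_4}|E_{\rho_4}v_{\rho_3}}$ explicitly in the $(\rho_3+\rho_4)$-weight space. I expect the $\fk{sp}_6$-realization to be considerably cleaner in practice.
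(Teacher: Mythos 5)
Your proposal is correct, but it takes a genuinely different route from both proofs in the paper. The paper's own proof (folded into the proof of lemma \ref{lb5}) is indirect and essentially computation-free: since $\Hom_\gk(\lambda_4\otimes 0,0)=0$, the dimension formula \eqref{eq2} forces $\dim K^0_\gk(\lambda_4)[0]=\dim L_\gk(\lambda_4)[0]=2$, and since $K^0_\gk(\lambda_4)[0]$ is spanned by $F_{\rho_3}v_{\rho_3}$ and $F_{\rho_4}v_{\rho_4}$ (the only candidates $F_\alpha^{n_{0,\alpha}+1}u$ with $\alpha$ simple and $\alpha$ a weight of $L_\gk(\lambda_4)$), linear independence -- and in particular your separate nonvanishing step -- comes for free from the count; this argument has the added benefit of proving lemma \ref{lb5} simultaneously. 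The paper's second proof, in appendix \ref{lb10}, computes the Gram matrix inside $\fk f_4$: both vectors have squared length $2$ while $|\bk{F_{\rho_3}F_\alpha v|F_{\rho_4}F_\beta v}|=|k_1k_2|=1$, so Cauchy--Schwarz is strict. Your main argument instead descends through the embedding $\fk a_1\oplus\fk c_3\subset\fk f_4$ to the concrete model $\Lambda^2_0\Cbb^6$ of $L_{\fk c_3}(\vartheta_2)$, where the two vectors become $e_3\wedge f_3-e_2\wedge f_2$ and $e_2\wedge f_2-e_1\wedge f_1$ modulo $\Cbb\omega^*$ and independence is a $3\times 3$ determinant; the localization to the $(1,14)$ summand via $(\rho_3|\theta)=(\rho_4|\theta)=0$ is correct, and the conclusion is robust against sign conventions for $F_{\rho_3},F_{\rho_4}$. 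What each approach buys: yours is explicit and verifiable by hand in a familiar $\fk{sp}_6$-model, at the cost of invoking the branching $26=(2,6)\oplus(1,14)$, which you assert rather than prove -- though it closes quickly by the weight count implicit in section \ref{lb6} (group B roots pair nontrivially with $\theta$ and give $(2,6)$; group A roots plus the two-dimensional zero space give the weights of $\vartheta_2$, and the highest weight $\epsilon_1+\epsilon_2$ forces irreducibility by dimension). Your "alternative" $\fk f_4$-internal sketch is essentially the appendix proof, but you leave the off-diagonal scalars undetermined where the appendix pins them down (via $[E_{\rho_3},F_\beta]=k_1F_\gamma$, $[E_{\rho_4},F_\alpha]=k_2F_\gamma$ with $|k_1|=|k_2|=1$); as written that sketch is incomplete, but your primary argument stands on its own.
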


\begin{proof}
We know that any vector in $K_\gk^\mu(\lambda_4)[0]$ must be of the form $F_\alpha^{n_{\mu,\alpha}+1}u$ where $u\in L_\gk(\lambda_4)$ has weight $(n_{\mu,\alpha}+1)\alpha$ and $\alpha$ is one of $\rho_1,\dots,\rho_4$. Since $(n_{\mu,\alpha}+1)\alpha$ is one of the 25  weights of $L_\gk(\lambda_4)$, the only possible case is that $n_{\mu,\alpha}=0$ (equivalently, $(\mu|\alpha)=0$) and $\alpha\in\{\rho_3,\rho_4\}$. (Note that $\rho_1,\rho_2$ are not weights of $L_\gk(\lambda_4)$.)

It is easy to calculate that
\begin{gather*}
(\mu|\rho_3)=\frac {n_3}2 ,\qquad (\mu|\rho_4)=\frac {n_4}2.
\end{gather*}
In case (a), $\mu$ is orthogonal to both $\rho_3,\rho_4$. It follows that $K_\gk^\mu(\lambda_4)[0]$ is spanned by $F_{\rho_3}v_{\rho_3},F_{\rho_4}v_{\rho_4}$. In particular, $K^0_\gk(\lambda_4)[0]$  is spanned by the two vectors. Since $\Hom_\gk(\lambda_4\otimes 0,0)$ is clearly trivial, by \eqref{eq2}, we must have $K_\gk^0(\lambda_4)[0]=L_\gk(\lambda)[0]$ whose dimension is $2$. Thus, $F_{\rho_3}v_{\rho_3},F_{\rho_4}v_{\rho_4}$ span $L_\gk(\lambda_4)[0]$.  This proves lemma \ref{lb1}. By this lemma, $K_\gk^\mu(\lambda_4)[0]$, which has dimension $2$,  equals $L_\gk(\lambda_4)[0]$. So $N^\mu_\mu=0$.

Assume case (b). Then $\mu$ is orthogonal to $\rho_4$ but not $\rho_3$. It follows that $K_\gk^\mu(\lambda_4)[0]=\mbb C\cdot F_{\rho_4}v_{\rho_4}$. Similarly, in case (c), $\mu$ is orthogonal to $\rho_3$ but not $\rho_4$. So $K_\gk^\mu(\lambda_4)[0]=\mbb C\cdot F_{\rho_3}v_{\rho_3}$. Finally, in case (d), neither $\rho_3$ nor $\rho_4$ is orthogonal to $\mu$. So $K^\mu_\gk(\lambda_4)[0]$ must be trivial.
\end{proof}

Similar to lemma \ref{lb1}, we have:

\begin{lm}\label{lb15}
Let $\rho_5=\frac 12[-1,1,1,-1]$ whose negative is a root in group A. Choose a non-zero $v_{\rho_5}\in L_\gk(\lambda_4)[\rho_5]$. Then $F_{\rho_3}v_{\rho_3}$ and $F_{\rho_5}v_{\rho_5}$ form a basis of $L_\gk(\lambda_4)[0]$.
\end{lm}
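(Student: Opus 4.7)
The plan is to use Lemma \ref{lb1} (which gives $\{F_{\rho_3}v_{\rho_3},\, F_{\rho_4}v_{\rho_4}\}$ as a basis of $L_\gk(\lambda_4)[0]$) and show that $F_{\rho_5}v_{\rho_5}$ has a non-zero $F_{\rho_4}v_{\rho_4}$-component when expanded in this basis; that immediately forces linear independence from $F_{\rho_3}v_{\rho_3}$. The key observation is that $\rho_5 = -\rho_3 - \rho_4$. This has two uses: first, $v_{\rho_5}$ can be reached from $v_{\rho_3}$ by successive lowering operations; second, $-\rho_5 - \rho_4 = \rho_3$ is a root, so by the standard fact that $[E_\alpha, E_\beta] \neq 0$ whenever $\alpha, \beta, \alpha+\beta$ are all roots, one has $[F_{\rho_5}, F_{\rho_4}] = c\, E_{\rho_3}$ for some non-zero $c$.

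Concretely, I would take $v_{-\rho_3}$ to be a non-zero multiple of $F_{\rho_3}^2 v_{\rho_3}$, normalized so that $E_{\rho_3} v_{-\rho_3} = F_{\rho_3} v_{\rho_3}$; such a vector exists because the $\rho_3$-string through $v_{\rho_3}$ is three-dimensional. Then set $v_{\rho_5} := F_{\rho_4} v_{-\rho_3}$, which is a valid non-zero $\rho_5$-weight vector since $n_{-\rho_3, \rho_4} = 1$ and $-\rho_3 + \rho_4$ has squared length $3$ (so is not a weight of $L_\gk(\lambda_4)$). Applying the commutator relation above,
\begin{align*}
F_{\rho_5} v_{\rho_5} = F_{\rho_5} F_{\rho_4} v_{-\rho_3} = F_{\rho_4} F_{\rho_5} v_{-\rho_3} + c\, E_{\rho_3} v_{-\rho_3}.
\end{align*}
Because $E_{\rho_5} v_{-\rho_3} = 0$ (since $-\rho_3 + \rho_5$ also has squared length $3$) and $n_{-\rho_3, \rho_5} = 1$, the vector $v_{-\rho_3}$ sits at the top of a length-two $\rho_5$-string, so $F_{\rho_5} v_{-\rho_3} = k\, v_{\rho_4}$ for some $k \neq 0$. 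Substituting yields
\begin{align*}
F_{\rho_5} v_{\rho_5} = k\, F_{\rho_4} v_{\rho_4} + c\, F_{\rho_3} v_{\rho_3},
\end{align*}
and since $k \neq 0$ this is not a scalar multiple of $F_{\rho_3} v_{\rho_3}$, giving the desired basis.

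The only conceptually non-trivial step is spotting the commutator identity $[F_{\rho_5}, F_{\rho_4}] = c\, E_{\rho_3}$, which translates the linear-independence question into a computation living in the two weight spaces $L_\gk(\lambda_4)[\rho_3]$ and $L_\gk(\lambda_4)[\rho_4]$. Once this is in hand, every remaining non-vanishing assertion ($c\neq 0$, $k \neq 0$, the non-vanishing of the intermediate weight vectors) reduces to $\fk{sl}_2$-string bookkeeping against the explicit list of weights of $L_\gk(\lambda_4)$ recorded in section \ref{lb6}, which is entirely routine.
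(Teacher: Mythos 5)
Your proof is correct, but it takes a genuinely different route from the paper's. The paper argues by Weyl group symmetry: it takes $\varpi=\varpi_\alpha\varpi_\beta$ with $\alpha=[0,1,1,0]$, $\beta=[1,0,0,0]$, notes that $\varpi(\rho_3)=\rho_3$ and $\varpi(\rho_4)=\rho_5$, and transports the linear independence of $F_{\rho_3}v_{\rho_3}, F_{\rho_4}v_{\rho_4}$ from Lemma \ref{lb1} to the pair $F_{\rho_3}v_{\rho_3}, F_{\rho_5}v_{\rho_5}$, implicitly using that $\varpi$ lifts to a unitary operator on $L_\gk(\lambda_4)$ carrying root vectors to root vectors, so that the angle computation of section \ref{lb10} is unchanged under the reflection. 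You instead expand $F_{\rho_5}v_{\rho_5}$ explicitly in the basis of Lemma \ref{lb1}: the identity $\rho_5=-\rho_3-\rho_4$ (which indeed holds), the standard fact $[\gk_{-\rho_5},\gk_{-\rho_4}]=\gk_{\rho_3}$ giving $[F_{\rho_5},F_{\rho_4}]=c\,E_{\rho_3}$ with $c\neq 0$, and your $\fk{sl}_2$-string checks (all correct: $E_{\rho_3}v_{\rho_3}=0$ so the $\rho_3$-string through $v_{\rho_3}$ has length three; $n_{-\rho_3,\rho_4}=n_{-\rho_3,\rho_5}=1$ while $-\rho_3+\rho_4$ and $-\rho_3+\rho_5$ have squared length $3$, hence are not weights) yield $F_{\rho_5}v_{\rho_5}=k\,F_{\rho_4}v_{\rho_4}+c\,F_{\rho_3}v_{\rho_3}$ with $k\neq 0$, which forces independence; note your argument does not even need $c\neq 0$, only $k\neq 0$. (Since $\dim L_\gk(\lambda_4)[\rho_5]=1$, your particular choice of $v_{\rho_5}=F_{\rho_4}v_{-\rho_3}$ is harmless, as rescaling does not affect the conclusion.) What each approach buys: the paper's reflection trick is shorter and explains conceptually why Lemmas \ref{lb1} and \ref{lb15} are the same statement up to symmetry; your computation is more elementary and self-contained, requiring no lifting of Weyl elements to the representation, and it delivers strictly more information, namely the explicit coordinates of $F_{\rho_5}v_{\rho_5}$ in the basis $\{F_{\rho_3}v_{\rho_3},F_{\rho_4}v_{\rho_4}\}$, very much in the spirit of the direct computation the paper carries out for Lemma \ref{lb1} in appendix \ref{lb10}.
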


\begin{proof}
For each root $\alpha$, let $\varpi_\alpha$ be the element in the Weyl group defined by the reflection along the hyperplane orthogonal to $\alpha$. In other words, for each $\lambda\in\fk h$, $\varpi_\alpha(\lambda)=\lambda-n_{\lambda,\alpha}\cdot \alpha$. Now we set $\alpha=[0,1,1,0]$ and $\beta=[1,0,0,0]$. Set $\varpi=\varpi_\alpha\varpi_\beta$. Then $\varpi(\rho_3)=\rho_3$ and $\varpi(\rho_4)=\rho_5$. Thus $F_{\rho_3}v_{\rho_3}$ and $F_{\rho_5}v_{\rho_5}$ are linearly independent since $F_{\rho_3}v_{\rho_3}$ and $F_{\rho_4}v_{\rho_4}$ are so. Indeed, one can show that two vectors are not parallel by calculating the angle between them. Due to the equivalence induced by reflections, the angles between $F_{\rho_3}v_{\rho_3},F_{\rho_5}v_{\rho_5}$ and between $F_{\rho_3}v_{\rho_3},F_{\rho_4}v_{\rho_4}$ can be calculated using the same algorithm and share similar properties. (See section \ref{lb10} for an instance of calculating the angle.) 
\end{proof}

When $\nu-\mu$ is a non-zero weight of $L_\gk(\lambda_4)$, the weight space $L_\gk(\lambda_4)[\nu-\mu]$ has dimension $1$. In this case, the number $N^\nu_\mu$ can be calculated by the following method. (Cf. \cite{Gui19b} corollary 1.18)

\begin{pp}\label{lb3}
Let $\mu,\nu\in P_+(\gk)$. Assume that $\nu-\mu$  is a non-zero weight of $L_\gk(\lambda_4)$. Then $N^\nu_\mu=1$ if and only if for any $\alpha\in\{\rho_1,\rho_2,\rho_3,\rho_4\}$,
\begin{align*}
\dim L_\gk(\lambda_4)[\nu-\mu+(n_{\mu,\alpha}+1)\alpha]=0.
\end{align*}
Otherwise, $N^\nu_\mu=0$.
\end{pp}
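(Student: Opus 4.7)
The plan is to combine the dimension formula \eqref{eq2} with a straightforward $\fk{sl}_2$-theoretic analysis. Since $\nu-\mu$ is a non-zero weight of $L_\gk(\lambda_4)$, the fact recalled in Section \ref{lb6} gives $\dim L_\gk(\lambda_4)[\nu-\mu]=1$. Thus \eqref{eq2} shows that $N_\mu^\nu$ is either $0$ or $1$, and equals $1$ exactly when $K_\gk^\mu(\lambda_4)[\nu-\mu]=0$. The task therefore reduces to proving that $K_\gk^\mu(\lambda_4)[\nu-\mu]=0$ if and only if the weight space $L_\gk(\lambda_4)[\nu-\mu+(n_{\mu,\alpha}+1)\alpha]$ is trivial for every $\alpha\in\{\rho_1,\rho_2,\rho_3,\rho_4\}$.

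One direction is automatic: if every such source weight space is trivial, then the spanning set for $K_\gk^\mu(\lambda_4)[\nu-\mu]$ given just before \eqref{eq1} consists only of the zero vector. For the other direction, fix a simple root $\alpha$ with $L_\gk(\lambda_4)[\nu-\mu+m\alpha]\neq 0$, where $m:=n_{\mu,\alpha}+1$, and take a non-zero $u$ there; I need to show $F_\alpha^m u\neq 0$. A useful preliminary observation is that $\nu-\mu+m\alpha\neq 0$, since otherwise $\nu=\mu-m\alpha$ would force $n_{\nu,\alpha}=n_{\mu,\alpha}-2m=-n_{\mu,\alpha}-2<0$, contradicting the dominance of $\nu$. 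In particular, this source weight space is $1$-dimensional, so $u$ is unique up to scalar.

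Under the $\fk{sl}_2$-triple spanned by $E_\alpha,F_\alpha,H_\alpha$, the vector $u$ has $H_\alpha$-eigenvalue $k=n_{\nu-\mu+m\alpha,\alpha}=n_{\nu,\alpha}+n_{\mu,\alpha}+2$, and the dominance of $\mu,\nu$ gives $k-m=n_{\nu,\alpha}+1\geq 1$, hence $k>m$. Now decompose $L_\gk(\lambda_4)=\bigoplus_i V_i$ into $\fk{sl}_2$-irreducibles under this triple and write $u=\sum_i u_i$ with $u_i\in V_i[k]$; each summand $V_i$ containing a non-zero $u_i$ has highest weight $d_i\geq k$. In an $\fk{sl}_2$-irreducible of highest weight $d$, the operator $F_\alpha^m$ annihilates a $k$-weight vector precisely when $d<2m-k$, but $d_i\geq k>2m-k$, so $F_\alpha^m u_i\in V_i[k-2m]$ is non-zero for every $i$ with $u_i\neq 0$. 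These images lie in distinct direct summands and are therefore linearly independent, so $F_\alpha^m u=\sum_i F_\alpha^m u_i\neq 0$. Hence $K_\gk^\mu(\lambda_4)[\nu-\mu]$ is non-zero and fills the $1$-dimensional space $L_\gk(\lambda_4)[\nu-\mu]$, giving $N_\mu^\nu=0$ as required.

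There is no real obstacle; the argument is essentially \cite{Gui19b} corollary 1.18 specialized to $L_\gk(\lambda_4)$. The only point requiring attention is the verification that $\nu-\mu+m\alpha\neq 0$, for otherwise the source space would be the $2$-dimensional zero weight space and a slightly more delicate $\fk{sl}_2$-decomposition would be needed; fortunately this case cannot occur under the dominance hypotheses.
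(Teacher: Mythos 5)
Your proof is correct. Note, though, that the paper does not actually prove Proposition \ref{lb3}: it states it with only a pointer to \cite{Gui19b}, corollary 1.18, so what you have written is in effect a self-contained reconstruction of that cited result, specialized to $L_\gk(\lambda_4)$. Your route is the expected one: the multiplicity-one fact from section \ref{lb6} together with \eqref{eq2} reduces everything to deciding when $K_\gk^\mu(\lambda_4)[\nu-\mu]$ vanishes; the ``if'' direction is immediate from the spanning set defining $K_\gk^\mu(\lambda_4)[\nu-\mu]$; and the converse is the standard $\fk{sl}_2$ computation showing $F_\alpha^m u\neq 0$ whenever $k>m$, where $k=n_{\nu,\alpha}+n_{\mu,\alpha}+2$ is the $H_\alpha$-eigenvalue of $u$ and $m=n_{\mu,\alpha}+1$, the inequality $k-m=n_{\nu,\alpha}+1\geq 1$ being exactly the dominance of $\nu$. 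The individual steps all check out: the criterion that $F_\alpha^m$ kills a weight-$k$ vector in an $\fk{sl}_2$-irreducible of highest weight $d$ precisely when $d<2m-k$, the bound $d_i\geq k>2m-k$, and the linear independence of the components $F_\alpha^m u_i$ lying in distinct summands. You also correctly flagged the one subtlety genuinely specific to $\fk f_4$ --- that the zero weight of $L_\gk(\lambda_4)$ has multiplicity $2$ --- and ruled out $\nu-\mu+m\alpha=0$ by dominance of $\nu$; in fact this exclusion is already implicit in $k\geq 2>0$, since a zero weight would force $k=0$. One cosmetic remark: the uniqueness of $u$ up to scalar is never used --- your argument shows $F_\alpha^m u\neq 0$ for \emph{every} non-zero $u$ in the source space, which is all that is needed to conclude $K_\gk^\mu(\lambda_4)[\nu-\mu]\neq 0$ --- so the multiplicity-one observation for the source space could be dropped without loss.
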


\section{The fundamental types}\label{lb11}

We say that an intertwining operator in $\mc V^l_\gk{\nu\choose\lambda_4~\mu}$ is of type $\nu\choose\mu$ level $l$.

We have shown that the level $1$ intertwining operators with charge space $L_\gk(\lambda_4,1)$ are energy-bounded. Thus, by  \cite{Gui19b} proposition 2.14 and that $(\lambda_4|\theta)=1$, to prove the energy bounds condition for any type $\nu\choose\mu$ level $l$ intertwining operator, it suffices to consider the case that $(\mu|\theta)=(\nu|\theta)=l$. In this case, $\nu-\mu$, which is orthogonal to $\theta$, is either $0$ or one of the six roots in group A. Also, since $N_{\lambda_4\lambda_4}^0=1$ (by, for example, proposition \ref{lb3}), $L_\gk(\lambda_4,l)$ is self-dual. Thus any type ${\nu\choose\mu}$ level $l$ intertwining operator is the adjoint of a type ${\mu\choose\nu}$ level $l$ intertwining operator.  (See \cite{Gui19a} section 1.3 for the definition of adjoint intertwining operators.) Thus, by \cite{Gui19a} corollary 3.7-(d), type ${\nu\choose\mu}$ level $l$ intertwining operators are energy-bounded if and only if type $\mu\choose\nu$ level $l$ intertwining operators are so.

\begin{df}\label{lb21}
Assume $k,l\in\mbb Z_+$ and $k\leq l$. Let $\mu_0,\nu_0\in P_+(\gk,k)$ and $\mu,\nu\in P_+(\gk,l)$. We say that type $\nu\choose\mu$ level $l$ \textbf{reduces to} type $\nu_0\choose\mu_0$ level $k$, if the following conditions are satisfied:

(a) $N^\nu_\mu\leq N^{\nu_0}_{\mu_0}$.

(b) There exists $\rho\in P_+(\gk,l-k)$ such that $\mu=\mu_0+\rho$ and $\nu=\nu_0+\rho$.
\end{df}

The following result generalizes \cite{Gui19b} lemma 2.15-(a), and applies to all Lie types but not just $\fk f_4$.

\begin{pp}\label{lb8}
Suppose that type $\nu\choose\mu$ level $l$ reduces to type $\nu_0\choose\mu_0$ level $k$. Then $N^\nu_\mu=N^{\nu_0}_{\mu_0}$. Moreover, all type $\nu\choose\mu$ level $l$ intertwining operators are energy-bounded if all type $\nu_0\choose\mu_0$ level $k$ intertwining operators are so.
\end{pp}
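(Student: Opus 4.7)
The plan is to construct an injective $\mathbb{C}$-linear map
\begin{align*}
\Phi\colon \mc V^k_\gk{\nu_0 \choose \lambda_4~\mu_0}\longrightarrow \mc V^l_\gk{\nu \choose \lambda_4~\mu}
\end{align*}
via compression along the diagonal VOA embedding $V^l_\gk\hookrightarrow V^k_\gk\otimes V^{l-k}_\gk$ and to check that $\Phi$ preserves polynomial energy bounds. The hypothesis $N^\nu_\mu\le N^{\nu_0}_{\mu_0}$, combined with injectivity, will then force $N^\nu_\mu=N^{\nu_0}_{\mu_0}$ and make $\Phi$ a bijection; hence every type $\nu\choose\mu$ level-$l$ intertwining operator is $\Phi(\mc Y_0)$ for some $\mc Y_0$, and is therefore energy-bounded whenever every level-$k$ operator is.

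Given $\mc Y_0$, I form the tensor product intertwining operator $\mc Y_0\otimes Y_\rho$ of $V^k_\gk\otimes V^{l-k}_\gk$, where $Y_\rho$ is the module vertex operator of $L_\gk(\rho,l-k)$. Under the diagonal embedding, the $V^l_\gk$-submodules $L_\gk(\lambda_4,l)$, $L_\gk(\mu,l)$, $L_\gk(\nu,l)$ sit inside the charge $L_\gk(\lambda_4,k)\otimes V^{l-k}_\gk$, source $L_\gk(\mu_0,k)\otimes L_\gk(\rho,l-k)$, and target $L_\gk(\nu_0,k)\otimes L_\gk(\rho,l-k)$ of $\mc Y_0\otimes Y_\rho$, and are generated on their lowest-energy pieces by $v_{\lambda_4}\otimes\Omega$, $v_{\mu_0}\otimes v_\rho$, and $v_{\nu_0}\otimes v_\rho$, respectively. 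I set $\Phi(\mc Y_0)$ to be the restriction of $\mc Y_0\otimes Y_\rho$ to the charge and source submodules composed with the orthogonal projection $P_\nu$ onto $L_\gk(\nu,l)$, normalized by an appropriate power of $z$ to absorb the coset conformal weight. On lowest-energy vectors a routine calculation identifies $\Psi\Phi(\mc Y_0)\in\Hom_\gk(\lambda_4\otimes\mu,\nu)$ with the composition
\begin{align*}
L_\gk(\lambda_4)\otimes L_\gk(\mu)\xrightarrow{\id\otimes\iota_\mu} L_\gk(\lambda_4)\otimes L_\gk(\mu_0)\otimes L_\gk(\rho)\xrightarrow{\Psi\mc Y_0\otimes\id} L_\gk(\nu_0)\otimes L_\gk(\rho)\xrightarrow{P_\nu} L_\gk(\nu),
\end{align*}
where $\iota_\mu\colon L_\gk(\mu)\hookrightarrow L_\gk(\mu_0)\otimes L_\gk(\rho)$ is the Cartan-product inclusion.

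For injectivity, suppose $\mc Y_0\ne 0$. By the injectivity of $\Gamma$ stated after \eqref{eq5}, some $w\in L_\gk(\lambda_4)[\nu_0-\mu_0]$ satisfies $\Psi\mc Y_0(w\otimes v_{\mu_0})=c\,v_{\nu_0}$ with $c\ne 0$; then
\begin{align*}
\Psi\Phi(\mc Y_0)\bigl(w\otimes(v_{\mu_0}\otimes v_\rho)\bigr)=c\,P_\nu(v_{\nu_0}\otimes v_\rho)=c\,v_\nu\ne 0,
\end{align*}
because $v_{\nu_0}\otimes v_\rho$ is the unique weight-$\nu$ vector (up to scalar) in $L_\gk(\nu_0)\otimes L_\gk(\rho)$ and lies wholly inside the Cartan component $L_\gk(\nu)$. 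For energy bounds, $\mc Y_0\otimes Y_\rho$ is energy-bounded whenever $\mc Y_0$ is ($V^{l-k}_\gk$ is strongly energy-bounded, and tensor products of energy-bounded intertwining operators are energy-bounded by \cite{Gui19a} Prop.~3.5). Restricting to $V^l_\gk$-submodules and projecting to the target costs only a multiplicative constant, since on each $V^l_\gk$-submodule the coset grading $L_0^{k+l-k}-L_0^l$ acts as a finite scalar and hence $(1+L_0^{k+l-k})^r$ is dominated by a constant multiple of $(1+L_0^l)^r$. Injectivity of $\Phi$ and the hypothesis together yield $N^\nu_\mu=N^{\nu_0}_{\mu_0}$ and $\Phi$ onto, completing the proof. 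I expect the principal technical hurdle to be verifying that the compression genuinely defines a $V^l_\gk$-intertwining operator with the correct conformal-weight shift $\Delta^\nu_{\lambda_4\mu}$; this amounts to bookkeeping the coset VOA contribution to the $z$-expansion, following the template of \cite{Gui19b} Lemma~2.15.
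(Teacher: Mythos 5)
Your proposal is correct and takes essentially the same route as the paper's proof: compress $\mc Y_0\otimes Y_\rho$ along the diagonal embedding $V^l_\gk\subset V^k_\gk\otimes V^{l-k}_\gk$, project onto the $V^l_\gk$-submodules generated by $v_{\lambda_4}\otimes\Omega$, $v_{\mu_0}\otimes v_\rho$, $v_{\nu_0}\otimes v_\rho$, prove injectivity by evaluating $\Gamma\circ\Psi$ on highest-weight vectors (your witness-vector computation is the same argument the paper compresses into ``$\Gamma\Psi\mc Y=\Gamma\Psi\wtd{\mc Y}$''), and conclude bijectivity from hypothesis (a). The only differences are presentational: you spell out the coset-$L_0$ scalar argument for the energy-bound transfer, which the paper delegates wholesale to \cite{Gui19b} theorem 2.12.
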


\begin{proof}
The main idea is the same as in the proof of \cite{Gui19b} lemma 2.15-(a). Define a linear map $\mc V^k_\gk{\nu_0\choose\lambda_4~\mu_0}\rightarrow \mc V^l_\gk{\nu\choose\lambda_4~\mu}$, $\mc Y\mapsto \wtd{\mc Y}$ as follows. Set $k'=l-k$. Set irreducible unitary $V^k_\gk\otimes V^{k'}_\gk$-modules
\begin{gather*}
W_1=L_\gk(\lambda_4,k)\otimes L_\gk(0,k'),\\ W_2=L_\gk(\mu_0,k)\otimes L_\gk(\rho,k'),\\ W_3=L_\gk(\nu_0,k)\otimes L_\gk(\rho,k').
\end{gather*}
Note that the vertex operator $Y_\rho$ associated to the $V^{k'}_\gk$-module $L_\gk(\rho,k')$ is of type $\rho\choose 0~\rho$. For each $\mc Y$ of type $\nu_0\choose\mu_0$ level $k$, 
\begin{align*}
\fk Y:=\mc Y\otimes Y_\rho
\end{align*}
is an intertwining operator of $V^k_\gk\otimes V^{k'}_\gk$ of type $W_3\choose W_1 W_2$. Since $V_\gk^{k'}$ is strongly energy-bounded, the vertex operator $Y_\rho$ is energy-bounded. So $\fk Y$ is energy-bounded if $\mc Y$ is so.

Let $v_{\lambda_4},v_{\mu_0},v_{\nu_0},\Omega,v_\rho$ be the highest weight (and lowest energy) vectors of $L_\gk(\lambda_4,k),L_\gk(\mu_0,k),L_\gk(\nu_0,k),L_\gk(0,k'),L_\gk(\rho,k')$. (Note that $\Omega$ can be chosen to be the vacuum vector of the VOA $V^{k'}_\gk=L_\gk(0,k')$.) Set 
\begin{align*}
w_1=v_{\lambda_4}\otimes \Omega,\qquad w_2=v_{\mu_0}\otimes v_\rho,\qquad w_3=v_{\nu_0}\otimes v_\rho
\end{align*}
which are homogeneous vectors  in $W_1,W_2,W_3$ respectively. Consider the diagonal embedding $\gk\subset \gk\oplus\gk$ which induces $V^l_\gk\subset V^k_\gk\otimes V^{k'}_\gk$. Then $w_1,w_2,w_3$ are highest weight vectors of the level $l$ affine Lie algebra $\wht\gk_l$ with weights $\lambda_4,\mu,\nu$ respectively. Let $\wtd{\mc Y}$ be the ``restriction" of $\fk Y$ to the irreducible $V^l_\gk$-submodules generated by $w_1,w_2,w_3$ respectively.  To be more precise, note that the unitary  $\gk$-modules $\gk w_1,\gk w_2,\gk w_3$ are equivalent to $L_\gk(\lambda_4),L_\gk(\mu),L_\gk(\nu)$ respectively. Let $e_3$ be the orthogonal projection of $W_3$ onto $\gk w_3$. Let $s=\Delta_{\lambda_4\mu_0}^{\nu_0}-1$, and define $T_{\mc Y}\in\Hom_\gk(\lambda_4\otimes\mu,\nu)$ by
\begin{gather*}
T_{\mc Y}:L_\gk(\lambda_4)\otimes L_\gk(\mu)\rightarrow L_\gk(\nu)\\
u^{(\lambda_4)}\otimes u^{(\mu)}\mapsto e_3\fk Y(u^{(\lambda_4)})_s u^{(\mu)}.
\end{gather*} Then, by \cite{Gui19b} theorem 2.12, there is a  unique $\wtd{\mc Y}$ of type $\nu\choose\mu$ level $l$ such that $\Psi\wtd{\mc Y}=T_{\mc Y}$, (Recall that the injective map $\Psi$ is defined in \eqref{eq4}.) and $\wtd{\mc Y}$ is energy-bounded if $\mc Y$ is so. (Indeed, by the proof of that theorem, $\wtd {\mc Y}$ is chosen to be $z^a\cdot p_3\fk Y(p_1,z)p_2$ for some $a\in\mbb R$, where $p_1,p_2,p_3$ are the orthogonal projections of $W_3$ onto $\wht\gk w_1,\wht\gk w_2,\wht\gk w_3$ respectively.)

From the definition of $\fk Y$ and $T_{\mc Y}$, it is not hard to see that $\Gamma\Psi\mc Y$ equals $\Gamma T_{\mc Y}$. Therefore, as $\Psi\wtd{\mc Y}=T_{\mc Y}$, we have $\Gamma\Psi\mc Y=\Gamma\Psi\wtd{\mc Y}$. Thus, by the injectivity of $\Psi$ and $\Gamma$, we have $\mc Y=0$ if and only if $\wtd{\mc Y}=0$. Therefore the linear map $\mc Y\mapsto\wtd{\mc Y}$ is injective. By condition (a), this map is bijective.
\end{proof}

\begin{pp}\label{lb7}
Assume that $(\nu|\theta)=(\mu|\theta)=l$. Then any intertwining operator of type $\nu\choose\mu$ level $l$ reduces to one of the following \textbf{fundamental types}:
\begin{enumerate}
	\renewcommand*\labelenumi{(\theenumi)}
	\item Type $2\lambda_3\choose\lambda_2+\lambda_4$ level $4$ and its adjoint.
	\item Type $\lambda_2\choose \lambda_1+\lambda_4$ level $3$ and its adjoint.
	\item Type $2\lambda_4\choose \lambda_3$ level $2$ and its adjoint.
	\item Type $\lambda_3+\lambda_4\choose\lambda_2$ level $3$ and its adjoint.
	\item Type $\lambda_2+\lambda_4\choose\lambda_1+\lambda_3$ level $4$ and its adjoint.
	\item Type $\lambda_3\choose\lambda_1$ level $2$ and its adjoint.
	\item Type $\lambda_3+\lambda_4\choose \lambda_3+\lambda_4$ level $3$.
	\item Type $\lambda_3\choose\lambda_3$ level $2$.
	\item Type $\lambda_4\choose\lambda_4$ level $1$.
\end{enumerate}
\end{pp}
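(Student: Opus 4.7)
The plan is to establish, for every admissible pair $(\mu,\nu)$ with $(\mu|\theta)=(\nu|\theta)=l$, an explicit reduction in the sense of Definition \ref{lb21} to one of the listed fundamental types. Write $\mu=n_1\lambda_1+n_2\lambda_2+n_3\lambda_3+n_4\lambda_4$ and similarly for $\nu$. As observed in the paragraph preceding Definition \ref{lb21}, $\nu-\mu$ must be either $0$ or one of the twelve roots in group A, and by the adjoint-symmetry argument recalled there (which uses $N^0_{\lambda_4\lambda_4}=1$), we may further assume that $\nu-\mu$ lies in $\{0\}$ together with the six positive roots of group A listed in Section \ref{lb6}. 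This gives seven families to treat.

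For the diagonal family $\nu=\mu$, I apply Lemma \ref{lb5}. If $n_3=n_4=0$ then $N^\mu_\mu=0$ and there is nothing to prove. In case (d), i.e.\ $n_3,n_4>0$, I set $\mu_0=\lambda_3+\lambda_4$ and $\rho=n_1\lambda_1+n_2\lambda_2+(n_3-1)\lambda_3+(n_4-1)\lambda_4$; this $\rho$ is dominant integral at level $l-3$, and Lemma \ref{lb5}(d) yields $N^\mu_\mu=2=N^{\mu_0}_{\mu_0}$, giving the reduction to fundamental type (7). The subcases $n_3>0,n_4=0$ and $n_3=0,n_4>0$ reduce analogously to (8) and (9) via Lemma \ref{lb5}(b) and (c), since in each of those subcases $\mu_0$ is chosen to be the unique minimal dominant weight whose diagonal fusion number, by Lemma \ref{lb5}, equals that of $\mu$.

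For each of the six off-diagonal families $\nu-\mu=\alpha$ with $\alpha$ a positive root of group A, I read off the negative coefficients of $\alpha$ in the $(n_1,n_2,n_3,n_4)$ basis; these dictate the smallest values the corresponding $n_i$'s of $\mu$ must take so that $\nu=\mu+\alpha$ remains dominant. Setting all other coefficients of $\mu$ to zero yields a minimal weight $\mu_0$, and a direct inspection matches $(\mu_0,\mu_0+\alpha)$ to exactly one of the fundamental types (1)--(6). For an arbitrary admissible pair $(\mu,\nu)$ in this family, $\rho:=\mu-\mu_0=\nu-\nu_0$ is then automatically dominant integral at level $l-k$ (where $k\in\{2,3,4\}$ is the level of the corresponding fundamental type), verifying condition (b) of Definition \ref{lb21}. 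Condition (a) is automatic from $N^\nu_\mu\leq\dim L_\gk(\lambda_4)[\alpha]=1$ once $N^{\nu_0}_{\mu_0}=1$ is established.

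The main obstacle is verifying $N^{\nu_0}_{\mu_0}=1$ for each of the six fundamental types (1)--(6). I would carry this out via Proposition \ref{lb3}: for each simple root $\rho_j$ ($j=1,\ldots,4$), compute $n_{\mu_0,\rho_j}=2(\mu_0|\rho_j)/(\rho_j|\rho_j)$ from the explicit coordinates of $\mu_0$ and of $\rho_j$ given in Section \ref{lb6}, then check that the weight $\alpha+(n_{\mu_0,\rho_j}+1)\rho_j$ does not appear in the list of $25$ weights of $L_\gk(\lambda_4)$. This amounts to $6\times 4=24$ routine but tedious checks; once completed, Proposition \ref{lb8} applies and the reduction chain is closed, which combined with the adjoint-symmetry reduction at the outset completes the proof.
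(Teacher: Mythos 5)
Your proposal is correct and follows essentially the same route as the paper's own proof: restrict $\nu-\mu$ to $0$ or the six positive group A roots via the adjoint symmetry, dispatch the diagonal case through Lemma \ref{lb5}, match each positive group A root to the minimal dominant pair $(\mu_0,\nu_0)$ realizing types (1)--(6), and verify $N^{\nu_0}_{\mu_0}=1$ by Proposition \ref{lb3} (the paper also permits LieART for this). Your write-up is in fact more explicit than the paper's terse proof about checking conditions (a) and (b) of Definition \ref{lb21}; the only flaw is terminological --- group A consists of six roots, the twelve relevant weights being these roots together with their negatives.
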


\begin{proof}
Assume $N^\nu_\mu>0$. By lemma \ref{lb5}, if $\nu=\mu$, then any type reduces to one of (7), (8), (9). Now assume $\nu\neq \mu$. Then $N^\nu_\mu\leq 1$, and  either $\nu-\mu$ or its negative is one of the six roots in group A. (See section \ref{lb6}.) By proposition \ref{lb3} or by LieART computations (see section \ref{lb14}), it is not hard to check that each of the first 6 cases has fusion/tensor product rule $1$. Moreover, these 6 cases correspond to the six positive roots in group A. Assume $\nu-\mu$ is positive. So $\nu-\mu$ is a positive root in group A. Then $\nu\choose\mu$ level $l$ reduces the one of (1)-(6) corresponding to $\nu-\mu$. If $\nu-\mu$ is negative, then $\mu\choose\nu$, which is adjoint to $\nu\choose\mu$, reduces to the one of (1)-(6).
\end{proof}

Note that only  case (7) has fusion rule $2$. All the other fundamental types have fusion rule $1$ by (for instance) proposition \ref{lb3}.

We close this section with an application of proposition \ref{lb8}. Given $\lambda,\mu\in P_+(\fk f_4,l)$, one can define the fusion product
\begin{align*}
L_{\fk f_4}(\lambda,l)\boxtimes L_{\fk f_4}(\mu,l)=\bigoplus_{\nu\in P_+(\fk f_4,l)} L_{\fk f_4}(\lambda,l)\otimes \mc V_{\fk f_4}^l{\nu\choose\lambda~\mu}^*,
\end{align*}
which we write as $\lambda\boxtimes\mu$ for short. (Note that $\lambda\boxtimes\mu$ depends on the level $l$.)  Thus, the multiplicity of $\nu$ in $\lambda\boxtimes\mu$ is the fusion rule $N^\nu_{\lambda\mu}$. Fusion products of more than two irreducible modules can be defined inductively. The associativity of $\boxtimes$ (i.e. $(\lambda\boxtimes\mu)\boxtimes\nu\simeq\lambda\boxtimes(\mu\boxtimes\nu)$) follows from, say, \cite{TUY89} or \cite{Hua95,Hua05} or \cite{NT05}.

\begin{thm}\label{lb22}
	For any $l=1,2,3,\dots$, the irreducible $V_{\fk f_4}^l$-module $L_{\fk f_4}(\lambda_4,l)$ $\boxtimes$-generates the category of (semisimple) $V_{\fk f_4}^l$-modules. Namely, any irreducible $V_{\fk f_4}^l$-module is equivalent to a submodule of a (finite) fusion product of $L_{\fk f_4}(\lambda_4,l)$.
\end{thm}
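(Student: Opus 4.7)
The plan is to exploit the identity $N^\nu_{\lambda_4\mu} = \dim\Hom_\gk(\lambda_4\otimes\mu,\nu)$ (valid for $\mu,\nu\in P_+(\gk,l)$ since $(\lambda_4|\theta)=1$), which makes fusion with $\lambda_4$ computable via the truncated classical tensor product. Let $T_l\subseteq P_+(\gk,l)$ denote the set of weights $\mu$ for which $L_\gk(\mu,l)$ appears as a submodule of some $L_\gk(\lambda_4,l)^{\boxtimes N}$; the goal is $T_l=P_+(\gk,l)$. We have $0\in T_l$ because $L_\gk(\lambda_4)$ is self-dual (so that $L_\gk(0)$ is a summand of $\lambda_4\boxtimes\lambda_4$), and $T_l$ is closed under the implication ``$\mu\in T_l$ and $N^\nu_{\lambda_4\mu}>0$ imply $\nu\in T_l$''. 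Hence it suffices, for each nonzero $\mu\in P_+(\gk,l)$, to produce a predecessor $\mu'\in P_+(\gk,l)$ with $N^\mu_{\lambda_4\mu'}>0$ and $(\mu',\theta)=(\mu,\theta)-1$; induction on the nonnegative integer $(\mu,\theta)$ (whose vanishing forces $\mu=0$) then closes the loop.

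To construct $\mu'$, I write $\mu=n_1\lambda_1+n_2\lambda_2+n_3\lambda_3+n_4\lambda_4$ and let $i$ be the largest index with $n_i>0$. Define
\[
w=\lambda_4\ (i=4),\qquad w=\lambda_3-\lambda_4\ (i=3),\qquad w=\lambda_2-\lambda_3\ (i=2),\qquad w=\lambda_1-\lambda_4\ (i=1),
\]
and put $\mu':=\mu-w$. In the orthogonal basis these four choices are $e_1$, $\tfrac12(1,1,1,1)$, $\tfrac12(1,1,1,-1)$, and $e_2$ respectively---all positive short roots of $\gk$---so each $w$ is an extremal weight of $L_\gk(\lambda_4)$ of multiplicity one. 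Their Dynkin expressions $(0,0,0,1),(0,0,1,-1),(0,1,-1,0),(1,0,0,-1)$ show that $\mu'$ is dominant integral, and since $(w,\theta)=1$ in every case, $(\mu',\theta)=(\mu,\theta)-1\leq l-1$, so $\mu'\in P_+(\gk,l)$.

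The remaining point is $N^\mu_{\lambda_4\mu'}>0$. When $i=4$ this is immediate: the tensor $v_{\lambda_4}\otimes v_{\mu'}$ of highest-weight vectors has weight $\mu'+\lambda_4=\mu$ and is annihilated by all raising operators, hence $L_\gk(\mu)$ occurs as a summand of $L_\gk(\lambda_4)\otimes L_\gk(\mu')$. For $i\in\{1,2,3\}$ I would invoke Proposition \ref{lb3}, which reduces matters to showing that for each simple root $\alpha\in\{\rho_1,\rho_2,\rho_3,\rho_4\}$ the vector $w+(n_{\mu',\alpha}+1)\alpha$ is not a weight of $L_\gk(\lambda_4)$. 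Since the nonzero weights of $L_\gk(\lambda_4)$ are exactly the twenty-four short roots of squared length $1$, this amounts to verifying $\lVert w+(n_{\mu',\alpha}+1)\alpha\rVert^2\geq 2$ in each of the twelve resulting subcases.

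The main technical step is precisely this last parametric norm calculation, since $n_{\mu',\alpha}$ depends on the Dynkin labels of $\mu$; however, in every case the orthogonal-coordinate expression of $w+(n_{\mu',\alpha}+1)\alpha$ splits into a ``$\mu$-independent'' part whose squared length already exceeds $1$ plus a contribution that grows monotonically with the relevant $n_i$, so the norm inequality holds uniformly and the verification collapses to an elementary finite check.
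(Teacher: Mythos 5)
Your argument is correct, but it takes a genuinely different route from the paper's. The paper deduces Theorem \ref{lb22} from an additivity property of the set $Q$ of weights reachable from $\lambda_4$: if $\lambda,\mu\in Q$ and $\lambda+\mu$ is admissible at level $l$, then $N^{\lambda+\mu}_{\lambda\,\mu}=N^{\lambda}_{\lambda\,0}=1$, which is proved by the intertwining-operator compression machinery behind Proposition \ref{lb8} (the diagonal embedding $V^l_\gk\subset V^k_\gk\otimes V^{l-k}_\gk$ and the injectivity of $\mc Y\mapsto\wtd{\mc Y}$); it then seeds $Q$ with $2\lambda_4,\lambda_3,\lambda_1,\lambda_2$ using the specific fusion rules of Proposition \ref{lb7}, cases (3), (6), (2). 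You instead run a descent on the nonnegative integer $(\mu|\theta)$, at each step subtracting one of the four extremal weights $e_1,\tfrac12[1,1,1,1],\tfrac12[1,1,1,-1],e_2$ of $L_\gk(\lambda_4)$ and certifying $N^{\mu}_{\mu'}>0$ by the Cartan-component argument (for $i=4$) or by the combinatorial criterion of Proposition \ref{lb3} (for $i\le 3$). Your route is purely finite-dimensional representation theory: it needs only the bijectivity of $\Psi$ when $(\lambda_4|\theta)=1$ and Proposition \ref{lb3}, avoiding the VOA compression argument entirely, and it yields an explicit multiplicity-one fusion chain from $0$ up to any admissible $\mu$ that stays inside $P_+(\gk,l)$ level by level, since $(w|\theta)=1$ in all four cases; the paper's additivity lemma is less computational and reuses machinery already needed for the energy-bounds results, and it is more flexible in that it adds arbitrary elements of $Q$ rather than single $\lambda_4$-weights. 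One remark: you assert rather than carry out the twelve norm verifications, but they do all hold --- the minimum of $\lVert w+(n_{\mu',\alpha}+1)\alpha\rVert^2$ over the twelve subcases is $2$, attained for instance at $i=2$, $\alpha=\rho_4$, $n_{\mu',\rho_4}=0$, where $w+\rho_4=[1,0,0,-1]$ is a long root; in every subcase the squared norm is increasing in the relevant Dynkin label, so your threshold $\geq 2$ (which excludes both the zero weight and the $24$ short roots of squared length $1$, i.e.\ all weights of $L_\gk(\lambda_4)$) is met uniformly, and there is no gap.
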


\begin{proof}
	Let $Q$ be a subset of $P_+(\fk f_4,l)$ consisting of all $\nu$ such that $L_{\fk f_4}(\nu,l)$ appears as an irreducible submodule of a  fusion product of $L_{\fk f_4}(\lambda_4,l)$. Equivalently, $Q$ is the smallest subset containing $\lambda_4$ and satisfying that if $\mu\in Q$ and $N^\nu_\mu\equiv N^\nu_{\lambda_4~\mu}>0$ then $\nu\in Q$. Note that $0\in Q$ since $\lambda_4$ is self-dual. 
	
We first claim that if $\lambda,\mu\in Q$ and $\lambda+\mu$ is admissible at level $l$ then $\lambda+\mu\in Q$. By the associativity of $\boxtimes$, it suffices to show that $N_{\lambda~\mu}^{\lambda+\mu}>0$. Suppose $N_{\lambda~\mu}^{\lambda+\mu}=0$. Set $k=(\theta|\lambda),\nu=\lambda+\mu,\mu_0=0,\nu_0=\lambda$. Then, type ${\nu\choose\lambda~\mu}$ level $l$ reduces to type $\lambda\choose \lambda~0$ level $k$ in the same sense of definition \ref{lb21}, namely, we have (a) $N_{\lambda\mu}^\nu\leq N^{\nu_0}_{\lambda \mu_0}$, and (b) there exists $\rho\in P_+(\fk f_4,l-k)$ such that $\mu=\mu_0+\rho$ and $\nu=\nu_0+\rho$. (Set $\rho=\mu$.) Thus, by the proof of proposition \ref{lb8}, we have $N^\nu_{\lambda\mu}=N^{\nu_0}_{\lambda\mu_0}$, i.e., $N^{\lambda+\mu}_{\lambda~\mu}=N^\lambda_{\lambda~0}$, which equals $1$. This gives a contradiction.

We now use the above result to prove $Q=P_+(\fk f_4,l)$. This is clearly true when $l=1$ (since both sets equal $\{0,\lambda_4\}$). Assume $l\geq 2$. Then, by the above result, $2\lambda_4\in Q$. By for instance proposition \ref{lb3}, we have $N^{\lambda_3}_{2\lambda_4}\equiv N^{\lambda_3}_{\lambda_4~2\lambda_4}=1$ (cf. Prop. \ref{lb7}-(3)). So $\lambda_3\in Q$. Since $N^{\lambda_1}_{\lambda_3}=1$ (cf. Prop. \ref{lb7}-(6)), $\lambda_1\in Q$. Thus $Q=P_+(\fk f_4,l)=\{0,\lambda_4,2\lambda_4,\lambda_1,\lambda_3\}$  when $l=2$. Assume $l\geq 3$. Then $\lambda_1+\lambda_4\in Q$ by the above paragraph. Thus, as $N^{\lambda_2}_{\lambda_1+\lambda_4}=1$ (cf. Prop. \ref{lb7}-(2)), we conclude $\lambda_2\in Q$. By the above paragraph, any $n_1\lambda_1+\cdots+n_4\lambda_4$, if admissible at level $l$, is in $Q$. This proves $Q=P_+(\fk f_4,l)$ in general.
\end{proof}

\section{Proof for the fundamental types}\label{lb19}

By the results in the last section, to prove theorem \ref{lb2}, it suffices to prove the energy bounds condition for any intertwining operator of fundamental type. Since the case $l\leq 2$ has already been proved, it remains to prove the cases (1) (2) (4) (5) (7) in proposition \ref{lb7}. We first discuss the easier case.

\subsubsection*{Proof for case (7)}

\begin{pp}\label{lb18}
The intertwining operators of type $\lambda_3+\lambda_4\choose\lambda_3+\lambda_4$ level $3$ are energy-bounded.
\end{pp}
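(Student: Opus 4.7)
By Lemma \ref{lb5}(d) the fusion space $\mc V^3_\gk{\lambda_3+\lambda_4\choose \lambda_4~\lambda_3+\lambda_4}$ is $2$-dimensional, so it suffices to exhibit two linearly independent energy-bounded intertwining operators of the given type. The plan is to construct them by the compression construction from the proof of Proposition \ref{lb8} applied through the diagonal embedding $V^3_\gk \subset V^1_\gk \otimes V^2_\gk$ in two complementary ways. Proposition \ref{lb8} itself does not apply directly, since each of the reductions below has $N^\nu_\mu=2>1=N^{\nu_0}_{\mu_0}$, but the construction $\mc Y\mapsto\wtd{\mc Y}$ in that proof still produces a well-defined energy-bounded intertwining operator without appealing to condition (a) of Definition \ref{lb21}.

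For $\wtd{\mc Y}_1$ I would take $k=1$, $k'=2$, $\mu_0=\nu_0=\lambda_4$, $\rho=\lambda_3$: start with the unique-up-to-scalar non-zero operator $\mc Y^{(1)}\in\mc V^1_\gk{\lambda_4\choose \lambda_4~\lambda_4}$ (Lemma \ref{lb5}(c); energy-bounded by Proposition \ref{lb4}), tensor with the vertex operator of $L_\gk(\lambda_3,2)$, and compress. For $\wtd{\mc Y}_2$ I would swap the roles of the two factors: take $k=2$, $k'=1$, $\mu_0=\nu_0=\lambda_3$, $\rho=\lambda_4$, starting from the non-zero operator in $\mc V^2_\gk{\lambda_3\choose \lambda_4~\lambda_3}$ (case (8) of Proposition \ref{lb7}; energy-bounded by Proposition \ref{lb4}) tensored with the vertex operator of $L_\gk(\lambda_4,1)$. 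In both cases the tensor product is energy-bounded, hence so is the compression.

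To prove linear independence of $\wtd{\mc Y}_1,\wtd{\mc Y}_2$, by the injectivity of $\Psi$ and $\Gamma$ it suffices to show that $\Gamma\Psi\wtd{\mc Y}_1,\Gamma\Psi\wtd{\mc Y}_2\in L_\gk(\lambda_4)[0]^*$ are linearly independent. Unwinding the compression, for $u\in L_\gk(\lambda_4)[0]$ one computes
\begin{align*}
\Gamma\Psi\wtd{\mc Y}_1(u) = \bigl\langle P_{\lambda_3+\lambda_4}\bigl(T_1(u\otimes v_{\lambda_4})\otimes v_{\lambda_3}\bigr)\bigm| v_{\lambda_3+\lambda_4}\bigr\rangle,
\end{align*}
where $T_1\in\Hom_\gk(\lambda_4\otimes\lambda_4,\lambda_4)$ is the underlying $\fk f_4$-homomorphism of $\mc Y^{(1)}$ and $P_{\lambda_3+\lambda_4}$ is the orthogonal projection to the $L_\gk(\lambda_3+\lambda_4)$-isotypic component of $L_\gk(\lambda_4)\otimes L_\gk(\lambda_3)$; an analogous formula holds for $\wtd{\mc Y}_2$ with $T_2\in\Hom_\gk(\lambda_4\otimes\lambda_3,\lambda_3)$. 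By Lemma \ref{lb1} and the defining property of $F_{\rho_3}v_{\rho_3},F_{\rho_4}v_{\rho_4}$ highlighted in the introduction, $T_1$ annihilates $F_{\rho_3}v_{\rho_3}\otimes v_{\lambda_4}$ and $T_2$ annihilates $F_{\rho_4}v_{\rho_4}\otimes v_{\lambda_3}$; hence $\Gamma\Psi\wtd{\mc Y}_1$ vanishes on $F_{\rho_3}v_{\rho_3}$ while $\Gamma\Psi\wtd{\mc Y}_2$ vanishes on $F_{\rho_4}v_{\rho_4}$.

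The main technical obstacle is the complementary non-vanishing, $\Gamma\Psi\wtd{\mc Y}_1(F_{\rho_4}v_{\rho_4})\neq 0$ and $\Gamma\Psi\wtd{\mc Y}_2(F_{\rho_3}v_{\rho_3})\neq 0$. Each reduces to the non-triviality of $T_i$ on the complementary basis vector, i.e., $T_1(F_{\rho_4}v_{\rho_4}\otimes v_{\lambda_4})$ is a \emph{non-zero} multiple of $v_{\lambda_4}$ (which is the exclusive counterpart of the characterization from Lemma \ref{lb1} used above), together with the observation that $v_{\lambda_4}\otimes v_{\lambda_3}$ is itself the highest weight vector of the $L_\gk(\lambda_3+\lambda_4)$-component of $L_\gk(\lambda_4)\otimes L_\gk(\lambda_3)$, so the projection $P_{\lambda_3+\lambda_4}$ does not destroy the matrix coefficient. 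I expect this to be a weight-space computation parallel to the proof of \cite{Gui19b} Lemma 5.6. Once established, the two functionals vanish on distinct basis vectors of $L_\gk(\lambda_4)[0]$ and are therefore non-proportional, so $\wtd{\mc Y}_1,\wtd{\mc Y}_2$ span the $2$-dimensional fusion space, completing the proof.
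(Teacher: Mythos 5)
Your construction coincides with the paper's proof of Proposition \ref{lb18}: the same two compressions through the diagonal embedding into $V^1_{\gk}\otimes V^2_{\gk}$ (your $\wtd{\mc Y}_1,\wtd{\mc Y}_2$ are the paper's $\wtd{\mc Y}_2,\wtd{\mc Y}_1$), the same appeal to Proposition \ref{lb4} for energy bounds of the level $1$ and $2$ inputs, and the same strategy of separating the two functionals $\Gamma\Psi\wtd{\mc Y}_i$ on the basis $F_{\rho_3}v_{\rho_3},F_{\rho_4}v_{\rho_4}$ of $L_\gk(\lambda_4)[0]$ from Lemma \ref{lb1}. You are also right that condition (a) of Definition \ref{lb21} plays no role in the existence of the compressed operator: in the proof of Proposition \ref{lb8} it is \cite{Gui19b} Theorem 2.12 that produces $\wtd{\mc Y}$ with $\Psi\wtd{\mc Y}=T_{\mc Y}$, and (a) is only invoked afterwards for bijectivity.

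The one genuine flaw is that what you flag as the ``main technical obstacle'' --- the complementary non-vanishing $\Gamma\Psi\wtd{\mc Y}_1(F_{\rho_4}v_{\rho_4})\neq 0$ and $\Gamma\Psi\wtd{\mc Y}_2(F_{\rho_3}v_{\rho_3})\neq 0$, which you defer to an unspecified weight-space computation parallel to \cite{Gui19b} Lemma 5.6 --- is not an obstacle at all, and leaving it open makes your argument incomplete as written. The last paragraph of the proof of Proposition \ref{lb8} gives the identity $\Gamma\Psi\wtd{\mc Y}_i=\Gamma\Psi\mc Y^{(i)}$ (your projection formula is exactly this identity, since $v_{\mu_0}\otimes v_\rho$ is already the highest weight vector of the relevant component, as you observed). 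Since $\Psi$ and $\Gamma$ are injective and $\mc Y^{(i)}\neq 0$, each $\Gamma\Psi\wtd{\mc Y}_i$ is a \emph{non-zero} functional on the $2$-dimensional space $L_\gk(\lambda_4)[0]$; by \eqref{eq1} and Lemma \ref{lb5} it vanishes on $K_\gk^{\mu_0}(\lambda_4)[0]$, which is $\mbb C\cdot F_{\rho_3}v_{\rho_3}$ in one case and $\mbb C\cdot F_{\rho_4}v_{\rho_4}$ in the other. A non-zero functional on a $2$-dimensional space that kills one basis vector is automatically non-zero on the other; equivalently, since $\dim\Hom_\gk(\lambda_4\otimes\mu_0,\mu_0)=1$, equation \eqref{eq1} forces $\Gamma\Psi\mc Y^{(i)}$ to span $K_\gk^{\mu_0}(\lambda_4)[0]^\perp$ exactly. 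So linear independence, and hence the proposition, follows with no analogue of \cite{Gui19b} Lemma 5.6; this is precisely how the paper concludes.
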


\begin{proof}
Choose a non-zero intertwining operator $\mc Y_1$ of type $\lambda_3\choose\lambda_3$ level $2$. Then $\mc Y_1$ is energy-bounded by proposition \ref{lb4}. As in the proof of proposition \ref{lb8},  one can construct $\wtd{\mc Y}_1$ of type $\lambda_3+\lambda_4\choose\lambda_3+\lambda_4$ level $3$ by compressing $\mc Y_1\otimes Y_{\lambda_4}$, where $Y_{\lambda_4}$ is the vertex operator of $L_\gk(\lambda_4,1)$. Moreover, $\wtd{\mc Y}_1$ is energy-bounded, and by the last paragraph of that proof, $\Gamma\Psi \wtd{\mc Y}_1=\Gamma\Psi\mc Y_1$. Now, $\Psi\mc Y_1$ is a non-zero element in $\Hom_\gk(\lambda_4\otimes\lambda_3,\lambda_3)$. Thus, by \eqref{eq1}, $\Gamma\Psi \wtd{\mc Y}_1$ is a non-zero linear functional on $L_\gk(\lambda_4)[0]$ orthogonal to $K_\gk^{\mu_3}(\lambda_3)[0]$. Therefore, by lemma \ref{lb5},  $\Gamma\Psi \wtd{\mc Y}_1$ kills $F_{\rho_4}v_{\rho_4}$.

Similarly, we choose a non-zero $\mc Y_2$ of type $\lambda_4\choose\lambda_4$ level $1$. Then one can construct $\wtd{\mc Y}_2$ of type $\lambda_3+\lambda_4\choose\lambda_3+\lambda_4$ level $3$ by compressing $\mc Y_2\otimes Y_{\lambda_3}$ where $Y_{\lambda_3}$ is the vertex operator of $L_\gk(\lambda_3,2)$. Then $\wtd{\mc Y}_2$ is energy-bounded, and, by \eqref{eq1} and lemma \ref{lb5}, $\Gamma\Psi \wtd{\mc Y}_2$ kills $F_{\rho_3}v_{\rho_3}$. Therefore, by lemma \ref{lb1}, $\wtd{\mc Y}_1$ and $\wtd{\mc Y}_2$ are linearly independent. So the $2$-dimensional vector space $\mc V_\gk^3{\lambda_3+\lambda_4\choose\lambda_4~\lambda_3+\lambda_4}$ is spanned by the energy-bounded intertwining operators $\wtd{\mc Y}_1$  and $\wtd{\mc Y}_2$. This completes the proof.
\end{proof}

\subsubsection*{Proof for cases (1) (2) (4) (5)}

Our proof of these four cases relies on the following proposition which holds for any complex (unitary) finite dimensional simple Lie algebra. The statement and the proof are similar to (but slightly more general than) Lemma 2.15-(b) of \cite{Gui19b}. It is also a generalization of proposition \ref{lb8}. 

\begin{pp}\label{lb9}
Choose $\lambda,\mu,\nu\in P_+(\gk,l)$ satisfying $\dim\mc V_\gk^l{\nu\choose\lambda~\mu}>0$. Assume that there exist $\rho,\mu_0,\nu_0\in P_+(\gk,l)$ satisfying the following conditions. (We set $k=\max\{(\lambda|\theta),(\mu_0|\theta),(\nu_0|\theta)\}$.)

(a) $\dim\mc V_\gk^l{\nu\choose\lambda~\mu}\leq \dim\mc V_\gk^k{\nu_0\choose\lambda~\mu_0}$. 

(b) $\mu=\mu_0+\rho$, and $\dim L_\gk(\nu_0)[\nu-\rho]=\dim\Hom_\gk(\nu_0\otimes\rho,\nu)=1$.

(c) $(\rho|\theta)+k\leq l$.

(d) For any non-zero $\mc Y\in \mc V_\gk^k{\nu_0\choose\lambda~\mu_0}$,
\begin{align}
(\Psi\mc Y)\Big(L_\gk(\lambda)[\nu-\mu]\otimes L_\gk(\mu_0)[\mu_0]\Big)\neq 0.\label{eq6}
\end{align}
(Note that the above expression is a subspace of $L_\gk(\nu_0)[\nu-\rho]$.) Then $\dim\mc V_\gk^l{\nu\choose\lambda~\mu}=\dim\mc V_\gk^k{\nu_0\choose\lambda~\mu_0}$, and all the intertwining operators in $\mc V_\gk^l{\nu\choose\lambda~\mu}$ are energy-bounded if those in $\mc V_\gk^k{\nu_0\choose\lambda~\mu_0}$ are so.
\end{pp}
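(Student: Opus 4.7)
My plan is to generalize the proof of Proposition \ref{lb8}, the only new feature being that $\nu$ need not equal $\nu_0+\rho$. Set $k'=l-k$; by (c), $\rho$ is admissible at level $k'$. Let $Y_\rho$ be the vertex operator of $L_\gk(\rho,k')$, which is energy-bounded since $V^{k'}_\gk$ is strongly energy-bounded. For $\mc Y\in\mc V_\gk^k{\nu_0\choose\lambda~\mu_0}$, form the $V^k_\gk\otimes V^{k'}_\gk$-intertwining operator $\fk Y=\mc Y\otimes Y_\rho$, which is energy-bounded whenever $\mc Y$ is. Under the diagonal embedding $V^l_\gk\subset V^k_\gk\otimes V^{k'}_\gk$, set $w_1=v_\lambda\otimes\Omega$ and $w_2=v_{\mu_0}\otimes v_\rho$; these are highest weight vectors of $\wht\gk_l$ of weights $\lambda$ and $\mu_0+\rho=\mu$ respectively. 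The new ingredient is $w_3$: by (b), $\dim\Hom_\gk(\nu_0\otimes\rho,\nu)=1$, so up to a scalar there is a unique non-zero highest weight vector $w_3$ of the diagonal $\gk$-action, of weight $\nu$, inside the lowest-energy subspace $L_\gk(\nu_0)\otimes L_\gk(\rho)$ of the target of $\fk Y$. Applying \cite{Gui19b} Theorem 2.12 to $(w_1,w_2,w_3,\fk Y)$ yields $\wtd{\mc Y}\in\mc V^l_\gk{\nu\choose\lambda~\mu}$ that is energy-bounded whenever $\fk Y$ is.

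Next I would show that $\mc Y\mapsto\wtd{\mc Y}$ is injective. Following the last paragraph of the proof of Proposition \ref{lb8} (where the identity $\Gamma\Psi\wtd{\mc Y}=\Gamma\Psi\mc Y$ is extracted from the explicit form of $\wtd{\mc Y}$ as $z^a\cdot p_3\fk Y(p_1,z)p_2$), the analogous computation gives, for $u\in L_\gk(\lambda)[\nu-\mu]$,
\[(\Gamma\Psi\wtd{\mc Y})(u)=C\cdot\bk{T\bigl((\Psi\mc Y)(u\otimes v_{\mu_0})\otimes v_\rho\bigr)|v_\nu}\]
for a non-zero constant $C$, where $T\in\Hom_\gk(\nu_0\otimes\rho,\nu)$ is the (up to scalar) unique non-zero element provided by (b) and is (up to scalar) the orthogonal projection $L_\gk(\nu_0)\otimes L_\gk(\rho)\to\gk\cdot w_3\simeq L_\gk(\nu)$. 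By the injectivity of $\Gamma$ together with the one-dimensionality asserted in (b), $T$ sends $u_0\otimes v_\rho$ to a non-zero multiple of $v_\nu$, where $u_0$ spans $L_\gk(\nu_0)[\nu-\rho]$. Since $(\Psi\mc Y)(u\otimes v_{\mu_0})$ lies in the one-dimensional space $L_\gk(\nu_0)[\nu-\rho]$, the displayed right-hand side vanishes iff $(\Psi\mc Y)(u\otimes v_{\mu_0})=0$. By (d), for every non-zero $\mc Y$ some $u$ makes this expression non-zero, whence $\Gamma\Psi\wtd{\mc Y}\neq 0$ and in particular $\wtd{\mc Y}\neq 0$.

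Combining the resulting injectivity with condition (a) forces $\dim\mc V^l_\gk{\nu\choose\lambda~\mu}=\dim\mc V^k_\gk{\nu_0\choose\lambda~\mu_0}$; every element of the left side is then of the form $\wtd{\mc Y}$ and hence energy-bounded. The main technical obstacle, absent from Proposition \ref{lb8}, is the choice and non-triviality of $w_3$: there $\nu=\nu_0+\rho$ made $w_3=v_{\nu_0}\otimes v_\rho$ a manifest highest-weight tensor automatically non-zero, whereas here $w_3$ is only implicitly characterised, its projection onto $L_\gk(\nu_0)[\nu-\rho]\otimes v_\rho$ is not a priori non-zero, and condition (d) is precisely the hypothesis needed to rule out a catastrophic collapse of the compression. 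Modulo this key point, the argument is a direct adaptation of Proposition \ref{lb8}.
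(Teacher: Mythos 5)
Your proposal is correct and follows essentially the same route as the paper's proof: compress $\mc Y\otimes Y_\rho$ via \cite{Gui19b} Theorem 2.12 with $w_3$ the (unique up to scalar, by (b)) highest weight vector of weight $\nu$ in $L_\gk(\nu_0)\otimes L_\gk(\rho)$, then deduce injectivity of $\mc Y\mapsto\wtd{\mc Y}$ from the injectivity of $\Gamma$, the one-dimensionality of $L_\gk(\nu_0)[\nu-\rho]$, and hypothesis (d), and conclude with (a). Your identification of the role of (d) and of the non-trivial choice of $w_3$ as the point of departure from Proposition \ref{lb8} matches the paper exactly (you even correctly attribute the admissibility of $\rho$ at level $l-k$ to condition (c)).
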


\begin{proof}
As in the proof of proposition \ref{lb8}, define a linear map $\mc V^k_\gk{\nu_0\choose\lambda~\mu_0}\rightarrow \mc V^l_\gk{\nu\choose\lambda~\mu}$, $\mc Y\mapsto \wtd{\mc Y}$ as follows. Set $k'=l-k$. Then, by (b), $\rho\in P_+(\gk,k')$. Set irreducible unitary $V^k_\gk\otimes V^{k'}_\gk$-modules
\begin{gather*}
W_1=L_\gk(\lambda,k)\otimes L_\gk(0,k'),\\ W_2=L_\gk(\mu_0,k)\otimes L_\gk(\rho,k'),\\ W_3=L_\gk(\nu_0,k)\otimes L_\gk(\rho,k').
\end{gather*}
For each $\mc Y\in \mc V^k_\gk{\nu_0\choose\lambda~\mu_0}$, 
\begin{align*}
\fk Y:=\mc Y\otimes Y_\rho
\end{align*}
is an intertwining operator of $V^k_\gk\otimes V^{k'}_\gk$ of type $W_3\choose W_1 W_2$.

We now define the homogeneous vectors $w_1,w_2,w_3$ as in the proof of proposition \ref{lb8}. Let $v_\lambda,v_{\mu_0},v_\rho$ be the highest weight (and lowest energy) vectors of $L_\gk(\lambda,k),L_\gk(\mu_0,k),L_\gk(\rho,k')$ respectively.  $\Omega$ is the vacuum vector and highest weight vector of $L_\gk(0,k')$. Set
\begin{gather*}
w_1=v_\lambda\otimes\Omega,\qquad w_2=v_{\mu_0}\otimes v_\rho.
\end{gather*}
The lowest ($L_0$-) energy subspace of $W_3$ is $L_\gk(\nu_0)\otimes L_\gk(\rho)$. By (b), the $\gk$-module $L_\gk(\nu_0)\otimes L_\gk(\rho)$ has a unique irreducible submodule (equivalent to) $L_\gk(\nu)$. We let $w_3$ be a (non-zero) highest weight vector of $L_\gk(\nu)$. Consider again the diagonal embedding $V_\gk^l\subset V_\gk^k\otimes V_\gk^{k'}$. Then $w_1,w_2,w_3$ are highest weight vectors of the affine Lie algebra $\wht\gk_l$ with weights $\lambda,\mu,\nu$ respectively. Let $e_3$ be the orthogonal projection of $W_3$ onto $\gk w_3$.  Let $s=\Delta_{\lambda\mu_0}^{\nu_0}-1$, and define $T_{\mc Y}\in\Hom_\gk(\lambda\otimes\mu,\nu)$ by
\begin{gather*}
T_{\mc Y}:L_\gk(\lambda)\otimes L_\gk(\mu)\rightarrow L_\gk(\nu)\\
u^{(\lambda)}\otimes u^{(\mu)}\mapsto e_3\fk Y(u^{(\lambda)})_s u^{(\mu)}.
\end{gather*} 
Again, by \cite{Gui19b} theorem 2.12, there is a  unique $\wtd{\mc Y}\in\mc V_\gk^l{\nu\choose\lambda~\mu}$ such that $\Psi\wtd{\mc Y}=T_{\mc Y}$,  and that $\wtd{\mc Y}$ is energy-bounded if $\mc Y$ is so.

It remains to check that $\mc Y\mapsto\wtd{\mc Y}$ is injective. Suppose $\wtd{\mc Y}=0$. Then $T_{\mc Y}=0$. Identify $L_\gk(\lambda,k)$ with $L_\gk(\lambda,k)\otimes\Omega$ and hence $L_\gk(\lambda)$ with $L_\gk(\lambda)\otimes\Omega$.  For each $u\in L_\gk(\lambda)[\nu-\mu]$, 
\begin{align*}
T_{\mc Y}(u\otimes w_2)=e_3(\mc Y\otimes Y_\rho)(u\otimes\Omega)_sw_2=e_3(\mc Y(u)_s v_{\mu_0}\otimes v_\rho),
\end{align*}
which is $0$. Since $\mc Y(u)_s v_{\mu_0}\otimes v_\rho\in L_\gk(\nu_0)\otimes v_\rho$, $e_3$ restricts to and can be regarded as the projection of $L_\gk(\nu_0)\otimes L_\gk(\rho)$ onto $L_\gk(\nu)\simeq \gk w_3$. Thus $e_3$ is a non-zero element in $\Hom_\gk(\nu_0\otimes\rho,\nu)$. Since $\mc Y(u)_s v_{\mu_0}\in L_\gk(\nu_0)[\nu-\rho]$, one has $e_3(\mc Y(u)_s v_{\mu_0}\otimes v_\rho)=(\Gamma e_3)(\mc Y(u)_s v_{\mu_0})$, which equals $0$. Since $\Gamma$ is injective, $\Gamma e_3\neq 0$. So $\mc Y(u)_s v_{\mu_0}=0$ since $L_\gk(\nu_0)[\nu-\rho]$ is one dimensional. To summarize, we have proved that $\mc Y(u)_s v_{\mu_0}=0$ for any $u\in L_\gk(\lambda)[\nu-\mu]$. Therefore, by condition (d), we must have $\mc Y=0$.
\end{proof}

\begin{lm}\label{lb12}
In proposition \ref{lb9}, we set $\alpha=\nu_0-\nu+\rho$ and $\eta=\nu-\mu$. Then condition (d) holds if the following are satisfied:

(i) $\dim \Hom_\gk(\lambda\otimes\mu_0,\nu_0)=1$.

(ii) $\alpha$ is a positive root of $\gk$.

(iii) There exists $u\in L_\gk(\lambda)[\eta]$ such that $E_\alpha u\notin K_\gk^{\mu_0}(\lambda)[\nu_0-\mu_0]$. \\
Moreover, assume (i) and (ii). Then (iii) holds if the following is satisfied:

(iii') $\dim L_\gk(\lambda)[\nu_0-\mu_0]=1$ and $(\eta|\alpha)<0$.

\end{lm}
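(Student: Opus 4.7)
The plan is to prove condition (d) by translating it, via $\gk$-equivariance of $T=\Psi\mc Y$, into a non-vanishing statement about the functional $\Gamma T$ precomposed with the raising operator $E_\alpha$. For a non-zero $\mc Y$, set $T=\Psi\mc Y\in\Hom_\gk(\lambda\otimes\mu_0,\nu_0)$; by (i) this Hom space is one-dimensional, so $T$ is determined up to scalar. Condition (d) asks for some $u\in L_\gk(\lambda)[\eta]$ with $T(u\otimes v_{\mu_0})\neq 0$. Using (ii)---namely that $\alpha$ is a positive root, so $E_\alpha v_{\mu_0}=0$---together with $\gk$-linearity of $T$, I obtain the identity
\begin{align*}
E_\alpha\,T(u\otimes v_{\mu_0})=T(E_\alpha u\otimes v_{\mu_0}).
\end{align*}
Since $\alpha=\nu_0-\nu+\rho$ and $\mu=\mu_0+\rho$, one checks $\eta+\alpha=\nu_0-\mu_0$, so $E_\alpha u\in L_\gk(\lambda)[\nu_0-\mu_0]$ and the right-hand side lies in the one-dimensional top weight space $L_\gk(\nu_0)[\nu_0]=\Cbb v_{\nu_0}$. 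By the definition of $\Gamma$, that right-hand side is a non-zero multiple of $v_{\nu_0}$ exactly when $(\Gamma T)(E_\alpha u)\neq 0$.

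Next, I would invoke \eqref{eq1} and \eqref{eq2} together with (i): hypothesis (i) gives $\dim L_\gk(\lambda)[\nu_0-\mu_0]-\dim K_\gk^{\mu_0}(\lambda)[\nu_0-\mu_0]=1$, so the image of $\Gamma$, which equals $K_\gk^{\mu_0}(\lambda)[\nu_0-\mu_0]^\perp$ by \eqref{eq1}, is one-dimensional. The functional $\Gamma T$, being non-zero and annihilating $K_\gk^{\mu_0}(\lambda)[\nu_0-\mu_0]$, therefore induces an isomorphism on the one-dimensional quotient $L_\gk(\lambda)[\nu_0-\mu_0]/K_\gk^{\mu_0}(\lambda)[\nu_0-\mu_0]$. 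Consequently $(\Gamma T)(E_\alpha u)\neq 0$ iff $E_\alpha u\notin K_\gk^{\mu_0}(\lambda)[\nu_0-\mu_0]$, which is precisely (iii). Combining with the previous paragraph yields $T(u\otimes v_{\mu_0})\neq 0$, establishing (d).

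For the ``moreover'' clause, I would derive (iii) from (iii'). The equality $\dim L_\gk(\lambda)[\nu_0-\mu_0]=1$ in (iii'), combined with (i) and \eqref{eq2}, forces $K_\gk^{\mu_0}(\lambda)[\nu_0-\mu_0]=0$; thus (iii) reduces to exhibiting a non-zero $u\in L_\gk(\lambda)[\eta]$ with $E_\alpha u\neq 0$. The space $L_\gk(\lambda)[\eta]$ is non-zero because the standing hypothesis $\dim\mc V_\gk^l{\nu\choose\lambda~\mu}>0$ of Proposition \ref{lb9} implies $\Hom_\gk(\lambda\otimes\mu,\nu)\neq 0$ via the injectivity of $\Psi$. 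Any non-zero $u\in L_\gk(\lambda)[\eta]$ satisfies $H_\alpha u=n_{\eta,\alpha}u$ with $n_{\eta,\alpha}=2(\eta|\alpha)/(\alpha|\alpha)<0$ by (iii'); decomposing $L_\gk(\lambda)$ under the $\fk{sl}_2$-triple $(E_\alpha,F_\alpha,H_\alpha)$, each irreducible summand has weights symmetric about $0$ with $\ker E_\alpha$ equal to its (non-negative) top weight space, so a vector of strictly negative $H_\alpha$-weight cannot be annihilated by $E_\alpha$. Hence $E_\alpha u\neq 0$.

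I do not anticipate a serious obstacle; the argument is essentially a direct adaptation of the scheme implicit in \cite{Gui19b} Lemma 5.6. The only care needed is the weight bookkeeping (in particular verifying $\eta+\alpha=\nu_0-\mu_0$) and tracking which hypothesis is invoked where---especially (ii), which enters both to justify $E_\alpha v_{\mu_0}=0$ and to produce the $\fk{sl}_2$-triple used in the second half.
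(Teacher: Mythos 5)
Your proof is correct and takes essentially the same route as the paper's: both reduce condition (d) to the non-vanishing of $(\Gamma T)(E_\alpha u)$ via $E_\alpha v_{\mu_0}=0$ and the weight computation $\eta+\alpha=\nu_0-\mu_0$, identify $\ker(\Gamma T)$ with $K_\gk^{\mu_0}(\lambda)[\nu_0-\mu_0]$ using (i) and \eqref{eq1}, and deduce (iii) from (iii') by the standard $\fk{sl}_2$-argument (the paper also records an equivalent inner-product computation giving $\bk{E_\alpha u|E_\alpha u}\geq -n_{\eta,\alpha}(u|u)>0$). The only cosmetic difference is that the paper phrases the key identity as the pairing $\bk{T(u\otimes v_{\mu_0})|F_\alpha v_{\nu_0}}=\bk{T(E_\alpha u\otimes v_{\mu_0})|v_{\nu_0}}$ via unitarity, whereas you apply $E_\alpha$ directly and observe that the result lands in the one-dimensional top weight space $\Cbb v_{\nu_0}$.
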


We will show that cases (1) and (4) satisfy (i) (ii) (iii), and that cases (2) and (5) satisfy (i) (ii) (iii').

\begin{proof}
Choose any non-zero $\mc Y\in\mc V_\gk^k{\nu_0\choose\lambda~\mu_0}$. Then $T:=\Psi\mc Y$ is non-zero. Choose any non-zero $u\in L_\gk(\lambda)[\eta]$ satisfying condition (iii). As usual, we let $v_{\mu_0}$ and $v_{\nu_0}$ be (non-zero) highest weight vectors of $L_\gk(\mu_0),L_\gk(\nu_0)$ respectively. Since $\alpha$ is a positive root, we have a lowering operator $F_\alpha$. Then
\begin{align*}
\bk{T(u\otimes v_{\mu_0})|F_\alpha v_{\nu_0}}=\bk{T(E_\alpha u\otimes v_{\mu_0})|v_{\nu_0}}+\bk{T(u\otimes E_\alpha v_{\mu_0})|v_{\nu_0}}=\bk{T(E_\alpha u\otimes v_{\mu_0})|v_{\nu_0}}.
\end{align*}
Since $E_\alpha u$ has weight $\eta+\alpha=\nu_0-(\mu-\rho)=\nu_0-\mu_0$, we have
\begin{align*}
\bk{T(u\otimes v_{\mu_0})|F_\alpha v_{\nu_0}}=(\Gamma T)(E_\alpha u).
\end{align*}
By (i) and \eqref{eq1}, $K_\gk^{\mu_0}(\lambda)[\nu_0-\mu_0]$ has codimension $1$, which must be the kernal of the nonzero linear functional $\Gamma T$. So $(\Gamma T)(E_\alpha u)$ is not zero since $E_\alpha u$ is not in this kernal. This proves \eqref{eq6}.

Assume that (i), (ii), and (iii') are true. Since we have assumed $\dim\mc V_\gk^l{\nu\choose\lambda~\mu}>0$ in proposition \ref{lb9}, $L_\gk(\lambda)[\nu-\mu]=L_\gk(\lambda)[\eta]$ is non-trivial. Choose any non-zero $u\in L_\gk(\lambda)[\eta]$.  Since $(\eta|\alpha)<0$ and hence $n_{\eta,\alpha}<0$, we have $E_\alpha u\neq 0$. Indeed, this follows either by a standard $\fk{sl}_2$-argument or by the following calculation:
\begin{align*}
&\bk{E_\alpha u|E_\alpha u}=\bk{F_\alpha E_\alpha u|u}=\bk{F_\alpha u|F_\alpha u}-\bk{H_\alpha u|u}\\
\geq &-\bk{H_\alpha u|u}=-n_{\eta,\alpha}(u|u)>0.
\end{align*}
Since $\dim L_\gk(\lambda)[\nu_0-\mu_0]=\dim\Hom_\gk(\lambda\otimes\mu_0,\nu_0)=1$, the subspace $K_\gk^{\mu_0}(\lambda)[\nu_0-\mu_0]$ is trivial by \eqref{eq2}. This proves (iii).
\end{proof}

We return to the Lie algebra $\gk=\fk f_4$. We now prove the energy bounds condition for the cases (1) (2) (4) (5). To be more specific, we prove:

\begin{pp}\label{lb13}
The intertwining operators of the following types are energy-bounded:\\[1ex]
$~~~~~~~$(1) Type $\lambda_2+\lambda_4\choose 2\lambda_3$ level $4$.\\[1ex]
$~~~~~~~$(2) Type $\lambda_2\choose \lambda_1+\lambda_4$ level $3$.\\[1ex]
$~~~~~~~$(4) Type $\lambda_2\choose\lambda_3+\lambda_4$ level $3$.\\[1ex]
$~~~~~~~$(5) Type $\lambda_2+\lambda_4\choose\lambda_1+\lambda_3$ level $4$.
\end{pp}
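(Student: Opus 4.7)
The plan is to apply Proposition~\ref{lb9} to each of the four cases, with hypothesis (d) verified via Lemma~\ref{lb12}, so as to reduce each intertwining operator at level $l$ to one at level $k \leq 2$, where energy-boundedness has already been established in Proposition~\ref{lb4}. For each case I would choose a decomposition $\mu = \mu_0 + \rho$ satisfying $(\rho|\theta) + k \leq l$, together with a weight $\nu_0$ such that $L_\gk(\nu)$ occurs with multiplicity one in $L_\gk(\nu_0) \otimes L_\gk(\rho)$ and $\dim L_\gk(\nu_0)[\nu-\rho] = 1$ (condition (b)). Hypothesis (a) of Proposition~\ref{lb9} and hypothesis (i) of Lemma~\ref{lb12} are then straightforward to verify using Proposition~\ref{lb3} or LieART.

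For cases (2) and (5), the natural reductions are $(\rho,\mu_0,\nu_0) = (\lambda_1,\lambda_4,\lambda_1)$ and $(\lambda_3,\lambda_1,\lambda_3)$ respectively, both landing at $k=2$. In each, $\nu_0 - \mu_0$ is a non-zero Group~A weight of $L_\gk(\lambda_4)$ of multiplicity one, so the simpler condition (iii') of Lemma~\ref{lb12} applies; it reduces to the numerical inequality $(\eta|\alpha) < 0$ with $\eta = \nu-\mu$ and $\alpha = \nu_0 - \nu + \rho$, which is a one-line dot-product check in the orthogonal basis.

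For cases (1) and (4), I would use $(\rho,\mu_0,\nu_0) = (\lambda_3,\lambda_3,\lambda_3)$ at $k=2$ and $(\lambda_3,\lambda_4,\lambda_4)$ at $k=1$ respectively. Both choices yield $\nu_0 - \mu_0 = 0$, so $K_\gk^{\mu_0}(\lambda_4)[0]$ is the one-dimensional line $\mbb C F_{\rho_4}v_{\rho_4}$ in case (1) and $\mbb C F_{\rho_3}v_{\rho_3}$ in case (4), by Lemma~\ref{lb5}. Since $\dim L_\gk(\lambda_4)[0] = 2$, condition (iii') is unavailable and one instead verifies condition (iii) directly: taking the unique (up to scalar) vector $u \in L_\gk(\lambda_4)[\eta]$, a short $\fk{sl}_2$-bracket computation identifies $E_\alpha u$ as a non-zero multiple of $F_{\rho_3}v_{\rho_3}$ (case (1), with $\alpha = \rho_3$ and $\eta = -\rho_3$, using $u \propto F_{\rho_3}^2 v_{\rho_3}$) or of $F_{\rho_5}v_{\rho_5}$ (case (4), with $\alpha = -\rho_5$ and $\eta = \rho_5$, using $u \propto v_{\rho_5}$). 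Non-membership in $K_\gk^{\mu_0}(\lambda_4)[0]$ is then precisely the linear independence asserted in Lemma~\ref{lb1} and Lemma~\ref{lb15}, respectively.

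The main obstacle is this transversality check in cases (1) and (4): it rests on having \emph{two} different linear-independence statements for the two-dimensional weight space $L_\gk(\lambda_4)[0]$, one adapted to $\{\rho_3,\rho_4\}$ and one to $\{\rho_3,\rho_5\}$, which is exactly what Lemmas~\ref{lb1} and~\ref{lb15} provide. Granting those, all four cases become one-step applications of Proposition~\ref{lb9} combined with Proposition~\ref{lb4}.
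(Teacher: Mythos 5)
Your treatment of cases (1), (4), and (5) coincides with the paper's proof: the same choices $(\rho,\mu_0,\nu_0)=(\lambda_3,\lambda_3,\lambda_3)$ at $k=2$, $(\lambda_3,\lambda_4,\lambda_4)$ at $k=1$, and $(\lambda_3,\lambda_1,\lambda_3)$ at $k=2$, the same use of (iii') in case (5), and the same transversality arguments in cases (1) and (4): the paper likewise identifies $E_{\rho_3}u$ as proportional to $F_{\rho_3}v_{\rho_3}$ by the $\fk{sl}_2$-argument and $E_{-\rho_5}v_{\rho_5}=F_{\rho_5}v_{\rho_5}$, and then invokes Lemmas \ref{lb1} and \ref{lb15} exactly as you do.

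Case (2), however, has a genuine gap. With your choice $(\rho,\mu_0,\nu_0)=(\lambda_1,\lambda_4,\lambda_1)$ one has $k=\max\{(\lambda_4|\theta),(\lambda_4|\theta),(\lambda_1|\theta)\}=2$ and $(\rho|\theta)=(\lambda_1|\theta)=2$, so $(\rho|\theta)+k=4>3=l$ and condition (c) of Proposition \ref{lb9} fails. This is not a removable technicality: the construction in that proposition needs $\rho\in P_+(\gk,k')$ with $k'=l-k=1$, i.e.\ the existence of the $V^1_{\fk f_4}$-module $L_\gk(\lambda_1,1)$; but $\lambda_1=\theta$ satisfies $(\lambda_1|\theta)=2>1$, so it is not admissible at level $1$ and the tensor intertwining operator $\mc Y\otimes Y_\rho$ cannot even be formed. (A further slip, incidental by comparison: $\nu_0-\mu_0=\lambda_1-\lambda_4=[0,1,0,0]$ is a Group B root, not Group A.) The repair is the paper's decomposition $(\rho,\mu_0,\nu_0)=(\lambda_4,\lambda_1,\lambda_3)$, which gives $k=2$ and $(\rho|\theta)+k=3=l$, with condition (a) supplied by the fundamental type ${\lambda_3\choose\lambda_1}$ at level $2$ having fusion rule $1$. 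Then $\nu_0-\mu_0=\frac 12[1,-1,1,1]$ is a multiplicity-one Group A weight of $L_\gk(\lambda_4)$, $\alpha=\nu_0-\nu+\rho=\frac 12[1,-1,-1,1]$ is a positive root, $\eta=[0,0,1,0]$, and $(\eta|\alpha)=-\frac 12<0$, so (iii') applies exactly as in your case (5), and condition (b) is checked by LieART as in the appendix. With this substitution the rest of your argument goes through verbatim.
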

Then their adjoint types are also energy-bounded. (See the beginning of section \ref{lb11}.) 

\begin{proof}
For all these cases, we let $\lambda=\lambda_4$. Recall the notations in section \ref{lb6}. Choose $\mu,\nu,\rho,\mu_0,\nu_0$, and calculate $\nu-\rho$,
\begin{gather*}
k=\max\{(\lambda|\theta),(\mu_0|\theta),(\nu_0|\theta)\},\qquad\alpha=\nu_0-\nu+\rho,\qquad\eta=\nu-\mu.
\end{gather*}
\textbf{Case (1)}, $k=2$.
\begin{gather*}
\mu=2\lambda_3,\qquad\nu=\lambda_2+\lambda_4,\qquad \rho=\lambda_3,\qquad \mu_0=\lambda_3,\qquad \nu_0=\lambda_3,\\
\nu-\rho=(0,1,-1,1),\qquad  \alpha=[0,0,0,1],\qquad \eta=[0,0,0,-1].
\end{gather*}
\textbf{Case (2)}, $k=2$.
\begin{gather*}
\mu=\lambda_1+\lambda_4,\qquad\nu=\lambda_2,\qquad \rho=\lambda_4,\qquad\mu_0=\lambda_1,\qquad \nu_0=\lambda_3,\\
\nu-\rho=(0,1,0,-1),\qquad  \alpha=\frac 12[1,-1,-1,1],\qquad \eta=[0,0,1,0].
\end{gather*}
\textbf{Case (4)}, $k=1$.
\begin{gather*}
\mu=\lambda_3+\lambda_4,\qquad\nu=\lambda_2,\qquad \rho=\lambda_3,\qquad\mu_0=\lambda_4,\qquad \nu_0=\lambda_4,\\
\nu-\rho=\frac 12[1,1,1,-1],\qquad  \alpha=\frac 12[1,-1,-1,1],\qquad \eta=\frac 12[-1,1,1,-1].
\end{gather*}
\textbf{Case (5)}, $k=2$.
\begin{gather*}
\mu=\lambda_1+\lambda_3,\qquad\nu=\lambda_2+\lambda_4,\qquad\rho=\lambda_3,\qquad \mu_0=\lambda_1,\qquad \nu_0=\lambda_3,\\
\nu-\rho=(0,1,-1,1),\qquad  \alpha=[0,0,0,1],\qquad \eta=\frac 12[1,-1,1,-1].
\end{gather*}

We know that $N^\nu_\mu=1$ since all the cases in proposition \ref{lb7}, except case (7), have fusion (tensor product) rule $1$. Thus $\dim\mc V_\gk^l{\nu\choose\lambda~\mu}=1$. We now check that conditions (a) (b) (c) of proposition \ref{lb9} and (i) (ii) (iii) (or (iii')) of lemma \ref{lb11} are satisfied. By the fusion rules in proposition \ref{lb7}, in each case, $N^{\nu_0}_{\mu_0}=\dim\mc V_\gk^k{\nu_0\choose\lambda~\mu_0}=1$. Thus (a) and (i) are satisfied. (c) and (ii) are obvious. In case (4), $\nu-\rho$ is in group B, which is a weight $L_\gk(\nu_0)=L_\gk(\lambda_4)$ with multiplicity $1$. So $\dim L_\gk(\nu_0)[\nu-\rho]=1$. One can show  that $\dim\Hom_\gk(\nu_0\otimes\rho,\nu)=1$ using proposition \ref{lb3}. This proves (b). For the other cases, condition (b) can be checked using LieART. (See section \ref{lb14} for details.)

It remains to check condition (iii) or (iii') of lemma \ref{lb12}. We first discuss cases (2) and (5). Then $\nu_0-\mu_0=(-1,0,1,0)$ is a non-zero weight of $L_\gk(\lambda)=L_\gk(\lambda_4)$. So $\dim L_\gk(\lambda)[\nu_0-\mu_0]=1$. It is easy to check that $(\eta|\alpha)<0$. This proves (iii').  Now, we assume case (1). Recall the definition of $v_{\rho_3}$ and $v_{\rho_4}$ in \eqref{eq7}. Then $K_\gk^{\mu_0}(\lambda)[\nu_0-\mu_0]=K_\gk^{\lambda_3}(\lambda_4)[0]$, which, by lemma \ref{lb5}, is spanned by $F_{\rho_4}v_{\rho_4}$. We have $\eta=-\alpha=-\rho_3$. Choose any non-zero $u\in L_\gk(\lambda)[\eta]=L_\gk(\lambda_4)[-\rho_3]$. Then $E_\alpha u=E_{\rho_3}u$, which has weight $0$. By a standard $\fk{sl}_2$-argument (where $\fk {sl}_2$ is generated by $E_{\rho_3}$ and $F_{\rho_3}$), $E_{\rho_3}u$ and $F_{\rho_3}v_{\rho_3}$ are proportional since the (irreducible) $\fk {sl}_2$-subrepresentations generated by $u$ and by $v_{\rho_3}$ agree. Thus, by lemma \ref{lb1}, $E_\alpha u$ and $F_{\rho_4}v_{\rho_4}$ are linearly independent. So $E_\alpha u$ is not in $K_\gk^{\mu_0}(\lambda)[\nu_0-\mu_0]$. This proves (iii). Similarly, in case (4), by lemma \ref{lb5} we have $K_\gk^{\mu_0}(\lambda)[\nu_0-\mu_0]=K_\gk^{\lambda_4}(\lambda_4)[0]=\mbb C\cdot F_{\rho_3}v_{\rho_3}$. Choose $\rho_5=\frac 12[-1,1,1,-1]$ and  non-zero $v_{\rho_5}\in L_\gk(\lambda_4)[\rho_5]$. Then $L_\gk(\lambda)[\eta]=L_\gk(\lambda_4)[\rho_5]$. We have $\alpha=-\rho_5$. So $E_\alpha v_{\rho_5}=E_{-\rho_5}v_{\rho_5}=F_{\rho_5}v_{\rho_5}$, which, by lemma \ref{lb15}, is not in $\mbb C\cdot F_{\rho_3}v_{\rho_3}$. This again proves (iii).
\end{proof}

Thus, we have proved that the intertwining operators of fundamental types are energy-bounded. This proves theorem \ref{lb2}.

\begin{appendices}

\section{Computations by LieART}\label{lb14}

One can calculate the weight multiplicities and the tensor product rules using LiE \cite{LCL92}\footnote{The LiE onlie service can be found on the personal website of M.A.A. van Leuwen. See \url{http://wwwmathlabo.univ-poitiers.fr/~maavl/LiE/form.html}} or the Mathematica package LieART \cite{FK15}. The LieART codes used in this article are provided below.

The following LieART (v.2.0.0) code shows the root system of $\gk=\fk f_4$.
\begin{center}
RootSystem[F4]//OrthogonalBasis
\end{center}
The weights and their multiplicities of $L_\gk(\lambda_4)=L_\gk((0,0,0,1))$ can be calculated by the code:
\begin{center}
WeightSystem[Irrep[F4][0,0,0,1]]//OrthogonalBasis
\end{center}

The following codes are used in the proof of proposition \ref{lb13} to check the first half of condition (b), namely $\dim L_\gk(\nu_0)[\nu-\rho]=1$. The outputs of  these codes are all $1$.\\
$\blt$ Case (1) input:
\begin{center}
WeightMultiplicity[Weight[F4][0,1,-1,1], Irrep[F4][0,0,1,0]]
\end{center}
$\blt$ Case (2) input:
\begin{center}
WeightMultiplicity[Weight[F4][0,1,0,-1], Irrep[F4][0,0,1,0]]
\end{center}
$\blt$ Case (4) input:
\begin{center}
WeightMultiplicity[Weight[F4][0,1,-1,0], Irrep[F4][0,0,0,1]]
\end{center}
$\blt$ Case (5) input:
\begin{center}
WeightMultiplicity[Weight[F4][0,1,-1,1], Irrep[F4][0,0,1,0]]
\end{center}
The code for case (1)  computes the multiplicity of the weight $(0,1,-1,1)$ in $L_\gk((0,0,1,0))=L_\gk(\lambda_3)$. The other codes are understood in a similar way.

To check the second half of condition (b), namely $\Hom_\gk(\nu_0\otimes\rho,\nu)=1$, we calculate   $\nu_0\otimes\rho$, which shows that $\nu$ (boxed in the outputs) appears precisely once in the tensor product. We write $(n_1,n_2,n_3,n_4)$ as $(n_1n_2n_3n_4)$ when none of the four integers exceeds $9$.\\
$\blt$ Case (1) input:
\begin{center}
DecomposeProduct[Irrep[F4][0,0,1,0], Irrep[F4][0,0,1,0]]//StandardForm
\end{center}
Output:
\begin{align*}
&(0010)\otimes (0010)=(0000)+(0001)+(1000)+2(0010)+2(0002)+2(1001)\\
+&(2000)+(0100)+(0003)+2(0011)+(1010)+(1002)+\boxed{(0101)}+(0020)
\end{align*}
$\blt$ Case (2) input:
\begin{center}
DecomposeProduct[Irrep[F4][0,0,1,0], Irrep[F4][0,0,0,1]]//StandardForm
\end{center}
Output:
\begin{align*}
&(0010)\otimes (0001)=(0001)+(1000)+(0010)+(0002)+(1001)+\boxed{(0100)}+(0011)
\end{align*}
$\blt$ Case (4): Same as case (2).\\
$\blt$ Case (5): Same as case (1).

\section{Another proof of lemma \ref{lb1}}\label{lb10}
Due to the importance of lemma \ref{lb1}, we give in this section an alternate proof of this lemma. Let $v\in L_\gk(\lambda_4)[\lambda_4]$ be a highest weight vector with length $1$. Set
\begin{gather*}
\alpha=[1,0,0,-1],\qquad \beta=\frac 12[1,1,1,1]
\end{gather*}
which are roots of $\gk$. Recall $\rho_3=[0,0,0,1],\rho_4=\frac 12[1,-1,-1,-1]$. Then $\alpha+\rho_3=\beta+\rho_4=[1,0,0,0]=\lambda_4$. By scaling $v_{\rho_3},v_{\rho_4}$, we may assume that $v_{\rho_3}=F_\alpha v$ and $v_{\rho_4}=F_\beta v$. We shall show that $F_{\rho_3}F_\alpha v$ and $F_{\rho_4}F_\beta v$ are not parallel by calculating the angle between them.

We first calculate the square length
\begin{align*}
\bk{F_{\rho_3}F_\alpha v|F_{\rho_3}F_\alpha v}=\bk{F_\alpha v|E_{\rho_3}F_{\rho_3}F_\alpha v}.
\end{align*}
Since $[E_{\rho_3},F_{\rho_3}]=H_{\rho_3}$, $E_{\rho_3}v=0$, and since $\rho_3-\alpha=[-1,0,0,2]$ is not a root, the above expression equals
\begin{align*}
&\bk{F_\alpha v|H_{\rho_3}F_\alpha v}=n_{\lambda_4-\alpha,\rho_3}\bk{F_\alpha v|F_\alpha v}=n_{\rho_3,\rho_3}\bk{F_\alpha v|F_\alpha v}\\
=&2\bk{F_\alpha v|F_\alpha v}=2\bk{v|H_\alpha v}=2n_{\lambda_4,\alpha}\lVert v\lVert^2=2.
\end{align*}
A similar calculation shows
\begin{align*}
\bk{F_{\rho_4}F_\beta v|F_{\rho_4}F_\beta v}=2n_{\lambda_4,\beta}\lVert v\lVert^2=2.
\end{align*}
To show that the two vectors are not parallel, we need to show that the absolute value of $\bk{F_{\rho_3}F_\alpha v|F_{\rho_4}F_\beta v}$ is not equal to $2$.

Set $\gamma=\beta-\rho_3=\alpha-\rho_4=\frac 12[1,1,1,-1]$ which is a positive root. We would like to express $[E_{\rho_3},F_\beta]$ in terms of $F_\gamma$. Note that $(F_\gamma|F_\gamma)=2/(\gamma|\gamma)=2$. Since $[F_{\rho_3},F_\beta]=0$ as $\rho_3+\beta=\frac 12[1,1,1,3]$ is not a root, we have 
\begin{align*}
&([E_{\rho_3},F_\beta]|[E_{\rho_3},F_\beta])=(F_\beta|[F_{\rho_3},[E_{\rho_3},F_\beta]])=(F_\beta|[[F_{\rho_3},E_{\rho_3}],F_\beta])\nonumber\\
=&-(F_\beta|[H_{\rho_3},F_\beta])=n_{\beta,\rho_3}(F_\beta|F_\beta)=1\cdot 2=2.
\end{align*}
Thus,  $[E_{\rho_3},F_\beta]=k_1F_\gamma$ for some constant $k_1$ satisfying $|k_1|=1$. Similarly, since $\rho_4+\alpha=\frac 12[3,-1,-1,-3]$ is not a root, and since $(F_\alpha|F_\alpha)=2/(\alpha|\alpha)=1$, we have
\begin{align*}
([E_{\rho_4},F_\alpha]|[E_{\rho_4},F_\alpha])=-(F_\alpha|[H_{\rho_4},F_\alpha])=n_{\alpha,\rho_4}(F_\alpha|F_\alpha)=2\cdot 1=2.
\end{align*}
Thus $[E_{\rho_4},F_\alpha]=k_2F_\gamma$ where $|k_2|=1$. Now, using the fact that $\rho_3-\rho_4=\frac 12[-1,1,1,3]$ is not a root, we  find
\begin{align*}
&\bk{F_{\rho_3}F_\alpha v|F_{\rho_4}F_\beta v}=\bk{F_\alpha v|E_{\rho_3}F_{\rho_4}F_\beta v}=\bk{F_\alpha v|F_{\rho_4}[E_{\rho_3},F_\beta ]v}=k_1\bk{F_\alpha v|F_{\rho_4}F_\gamma v}\\
=&k_1\bk{E_{\rho_4}F_\alpha v|F_\gamma v}=k_1k_2\bk{F_\gamma v|F_\gamma v}=k_1k_2\bk{v|H_\gamma v}=k_1k_2n_{\lambda_4,\gamma}=k_1k_2
\end{align*}
whose absolute value is $1$. This finishes the proof.

\end{appendices}


\noindent {\small \sc Department of Mathematics, Rutgers University, USA.}

\noindent {\em E-mail}: bin.gui@rutgers.edu\qquad binguimath@gmail.com
\end{document}